\documentclass[12pt]{article}

\usepackage{color,graphicx}

\usepackage[T2A]{fontenc}
\usepackage[utf8]{inputenc}
\usepackage[russian,english]{babel}

\usepackage{fullpage}
\usepackage{amsmath,amssymb,amsthm}
\usepackage{epsfig}\usepackage{graphicx}
\newtheorem{theorem}{Theorem}
\newtheorem{lemma}{Lemma}

\newtheorem{prop}{Proposition}

\theoremstyle{remark}
\newtheorem{ex}{Example}
\newtheorem{remark}{Remark}
\newtheorem{definition}{Definition}

\newcommand{\im}{\text{Im\,}}
\newcommand{\N}{\mathbb N}
\newcommand{\R}{\mathbb R}
\newcommand{\pair}[1]{\langle #1\rangle}

\newcommand{\cen}{c}
\newcommand{\crown}{\textit{Crown}}

\newcommand{\eps}{\varepsilon}
\newcommand{\types}{\textit{Typ}}
\newcommand{\marked}{\mu}
\newcommand{\undef}{\bot}

\date{}

\title{Matching Rules for Substitution and 
Hierarchical Tilings for any Substitution with Finite Local Complexity}
\author{Nikolay Vereshchagin\\
  Moscow State University, HSE University,
  Yandex\thanks{This paper was prepared within the framework of the HSE University Basic Research Program (project no.~HSE-BR-2025-24).}}

\begin{document}
\maketitle

\begin{abstract}
The Goodman-Strauss theorem states that for ``almost every'' substitution $\tau$, 
the family of substitution tilings is sofic, that is, it can be defined by local matching rules 
for some decoration of tiles. The 
conditions on the substitution that guarantee the soficity 
are quite complicated in the statement of the theorem.  
In this paper we propose a version of the Goodman-Strauss theorem with very simple conditions on the substitution:
the family of substitution tilings must have finite local complexity (FLC), that is, 
the number of crowns that appear in $\tau$-supertiles is finite.
Like the original theorem, our theorem provides matching rules for 
all known substitution tilings. 

We also prove a similar theorem for the family of \emph{hierarchical} tilings associated with the given substitution. 
A tiling is called $\tau$-hierarchical if it has a composition under $\tau$, such that this composition also has a composition, 
and so on, infinitely many times. Every substitution tiling is hierarchical, but the converse is not always true. 
\end{abstract}
\section{Introduction}

\subsection{Substitutions and Substitution Tilings}

Assume that a  finite set of polygons $P=\{\alpha_1,\dots,\alpha_M\}$ is given. 
Some of them may be congruent; to distinguish them, we  
mark them with different colors. These polygons are called \emph{prototiles}, and \emph{tiles} are any shifts of prototiles.
Prototiles cannot be rotated or flipped.
A shift of $\alpha_i$ is called a tile \emph{of the form $\alpha_i$}.
A \emph{tiling} is any set of pairwise non-overlapping  tiles, that is, tiles having no common inner points. 
We will say that line segments \emph{overlap} if they share a fragment of positive length. 
 A tiling is said to be \emph{side-to-side} 
if any two overlapping sides of its  tiles coincide. 

In addition,
a \emph{substitution} $\tau$ is given, that is, for each
polygon $\alpha_i$ a \emph{rule} $\alpha_i\to\mathcal{M}_i$ is given,
where $\mathcal{M}_i$ is some tiling of the polygon $\theta \alpha_i$.
Here $\theta>1$ is a real number  and multiplication by $\theta$
means stretching by $\theta$ times without rotation.
The tiling $\mathcal{M}_i$
is called 
 the \emph{$\tau$-decomposition} of $\alpha_i$
and is denoted by $\tau \alpha_i$. The image of a side $a$ of $\alpha_i$ 
is denoted by $\tau a$, those images 
are called \emph{macrosides}.
The tiles from $\mathcal{M}_i$ are called \emph{children}
of the tile $\alpha_i$, and the tile itself is their \emph{parent}.
Two examples of a substitution are shown in Fig.~\ref{f13}, \ref{f14}.
\begin{figure}[t]
\begin{center}
\includegraphics[scale=.6]{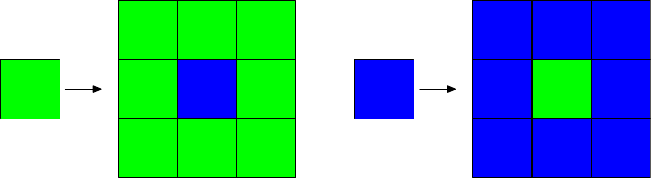}
\end{center}
\caption{First  substitution}\label{f13}
\end{figure}
\begin{figure}[t]
\begin{center}
\includegraphics[scale=.4]{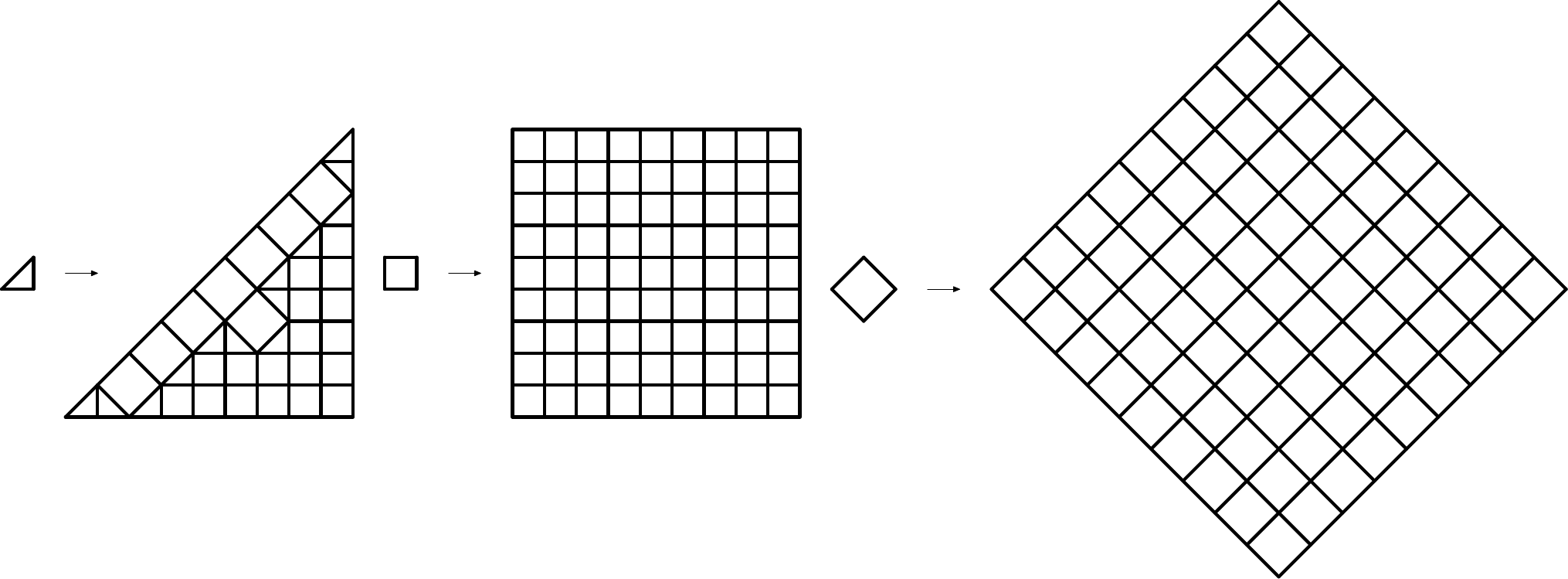}
\end{center}
\caption{Second substitution. The first prototile can be rotated by 90, 180, and 270 degrees.
Therefore, the total number of prototiles is 6.}\label{f14}
\end{figure}

The action of substitution on tilings is defined as follows: the
rules are applied to all tiles of the tiling simultaneously, i.e., the tile $\alpha_i$ is replaced by the tiling $\mathcal M_i$.
The resulting tiling is called the \emph{decomposition} of the original tiling.
The inverse operation is called the \emph{composition}, the composition of a tiling
may not exist and may not be unique.

\emph{Supertiles} are finite tilings obtained from prototiles by several 
decompositions. The number of decompositions is called the \emph{order} of the supertile.
For example, all macrotiles are supertiles of the first order.
See  Fig.~\ref{supertiles} for another example.
\begin{figure}[t]
\begin{center}
\includegraphics[scale=.8]{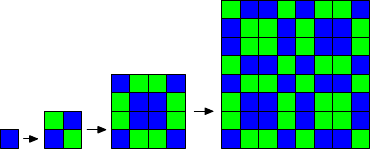}\hskip 2cm \includegraphics[scale=.8]{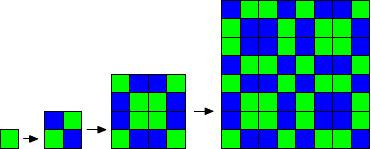}
\end{center}
\caption{The figure shows a substitution and supertiles of order 1, 2 and 3.}\label{supertiles}
\end{figure}
A tiling is called \emph{a $\tau$-substitution tiling}
if each of its finite parts is included in some $\tau$-supertile.

In general, a \emph{local rule} for tilings with tiles from $P$ 
is identified by 
a set $\mathcal R$ (possibly infinite) of tilings with $P$-tiles such that for some real number $D$ 
the diameter of all
tilings in $\mathcal R$  is at most $D$.
We say  that a tiling $\mathcal T$ 
satisfies the local rule $\mathcal R$, or is \emph{proper} (if  $\mathcal R$ is clear from the context),
if it has no subset whose shift belongs to $\mathcal R$.
Tilings from $\mathcal R$ are called \emph{illegal}.

A \emph{decoration}  of a tile set $P$ is any tile set $P'$
and a surjective function  $\pi:P'\to P$, called 
the \emph{projection}, such that 
geometrically $\pi\alpha$ coincides with $\alpha$ for all $\alpha\in P'$,
that is, $\pi\alpha$ is a shift of $\alpha$.
We may think that tiles whose projections coincide have different colors.

A family of tilings $\mathcal F$  is called   \emph{sofic} if 
there is a decoration $P'$ of $P$ and a local rule $\mathcal R$ for $P'$-tiles such that 
a tiling  is in $\mathcal F$ 
if and only if it is the projection of a proper tiling with tiles
from $P'$.

\begin{ex}
Here is an example of decoration and local rule.
For every $\alpha\in P$ we consider several shifts of $\alpha$ and for each shift we color
each side of that shift in a certain color.
The projection then maps all shifts of $\alpha$ back to $\alpha$.
The local rule stipulates that tiles can be connected only side-to-side so that
colors of shared sides match.
This is the most common kind of decoration and local rules.
Such decorations will be called \emph{side decorations}. 
If it is possible to prove that $F$ is sofic via a decoration and a local rule of this kind, 
then we call $F$ \emph{side-to-side sofic}. Obviously all tilings from a side-to-side sofic family are side-to-side.
\end{ex}

\subsection{Mozes' Theorem}
Mozes' theorem~\cite m states that under certain conditions on the substitution
the family of substitution tilings is side-to-side sofic.
The assumptions on the substitution in Mozes' theorem are that all prototiles are squares of the same size 
and one more condition that we find difficult to state here.

\subsection{Goodman-Strauss Theorem}
The Goodman-Strauss theorem generalizes Mozes' theorem. It
states the same thing (the soficity of
the family of substitution tilings) but under weaker conditions
on the substitution.
Here is how this theorem is formulated in~\cite{gs}:
\emph{Every substitution tiling of $\R^d$, $d > 1$, can be enforced with finite
matching rules, subject to a mild condition: the tiles are required to admit a set of ``hereditary sides''
such that the substitution tiling is ``sibling-side-to-side''}. In this quotation, the phrase
``Every substitution tiling of $\R^d$, $d > 1$, can be enforced with finite matching rules'' in our terminology means: ``Every family of substitution tilings is sofic''.
But the second part of the formulation, which talks about sufficient conditions for this,
is quite complicated, see Section 1.4 of~\cite{gs} on pp.  181--182. 

\subsection{Fernique --- Ollinger construction}

A tiling $\mathcal{T}$
is called \emph{[side-to-side] $\tau$-hierarchical} 
if there exists an infinite sequence of
[side-to-side]
tilings
$\mathcal{T}_0=\mathcal{T},\mathcal{T}_1,\mathcal{T}_2,\dots$ in which for all $i$ the tiling $\mathcal{T}_{i+1}$ is a $\tau$-composition of $\mathcal{T}_i$.

The Fernique --- Ollinger theorem~\cite{fo} states that, under some conditions on the given substitution, the set of side-to-side-hierarchical
tilings is side-to-side sofic.\footnote{In fact, their statement is more general, they consider a broader class of substitutions
--- the so-called ``combinatorial substitutions''. Also they consider tilings of $\R^d$ for $d>2$.} 

\subsection{Our contribution}

Let us consider the following  assumptions called STS.
\begin{enumerate}
\item[STS:]\label{r0}
  All $\tau$-supertiles are side-to-side tilings.  (Hence
  every $\tau$-substitution tiling is side-to-side.) 
\end{enumerate}

Our contribution consists of the following theorems.
\begin{enumerate}
\item We improved the Fernique --- Ollinger construction
to work 
under the STS assumption.
Namely, we proved that the family of side-to-side
hierarchical tilings
is  side-to-side sofic provided the given substitution satisfies STS.  

\item
Under STS
we proved that the  family of substitution  tilings
is side-to-side sofic as well.  

\item 
We proved that 
both families of hierarchical and substitution tilings are 
sofic provided  the given substitution  has finite local complexity. Let us define what this means.
\begin{definition} \label{cr-all}
Let $\mathcal{T}$ be a tiling  with $\tau$-tiles.
  \emph{A crown} of  $\mathcal{T}$  at a vertex $V$ of its tile is the fragment
  of $\mathcal{T}$ consisting of all tiles
  from $\mathcal{T}$ that include $V$. 
  A crown is called \emph{$\tau$-allowed} if it is a crown at some vertex of some $\tau$-supertile.
 We say that $\tau$ \emph{has Finite Local Complexity (FLC)} if 
the number of $\tau$-allowed crowns is finite (up to shifts). 
\end{definition}
Note that STS implies that $\tau$ has FLC, as the total number of crowns that are side-to-side tilings is finite.\footnote{
Indeed, in a side-to-side tiling no vertex of a tile lies in the interior of a side of another tile,
so a crown at a vertex $V$ consists of tiles surrounding $V$ side-to-side; the number of such
	configurations up to shifts is bounded in terms of the prototiles.}
But not the other way around. Thus,
compared to the first two results, here both the assumption and the conclusion are weaker. 
\end{enumerate}

A previous version of our technique that works under quite general but 
much more complicated and restrictive assumptions was presented in the previous 
paper~\cite{ver} by the author.

\section{The relationship between substitution and hierarchical tilings}

\begin{lemma}\label{th11}
Every substitution tiling $\mathcal{T}$ of the plane
 has a composition that is
again a substitution tiling.
Hence every substitution tiling is hierarchical.
\end{lemma}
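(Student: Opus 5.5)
The plan is a compactness (König's lemma) argument. Fix a point $O$ of the plane, say the origin. For $n=1,2,\dots$ let $\mathcal{T}^{(n)}$ be the finite fragment of $\mathcal{T}$ consisting of all tiles meeting the disk $B_n$ of radius $n$ centered at $O$. By the definition of a substitution tiling, $\mathcal{T}^{(n)}$ is contained in a shift of some $\tau$-supertile $S_n=\tau^{k_n}\alpha_{i_n}$. Since supertiles of order $k$ have diameter at least a constant times $\theta^k$, while a fixed prototile set admits only finitely many supertiles of each small order, we may assume $k_n\ge1$ (indeed $k_n\to\infty$) by replacing $S_n$ with a larger supertile containing it. Then $S_n=\tau S'_n$, where $S'_n=\tau^{k_n-1}\alpha_{i_n}$ is a supertile. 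Let $\mathcal{C}_n$ be the set of tiles of the appropriate shift of $S'_n$ whose decompositions contain at least one tile of $\mathcal{T}^{(n)}$. Then $\mathcal{C}_n$ is a finite subset of a shifted supertile, and its decomposition contains $\mathcal{T}^{(n)}$, with every tile of the decomposition that meets $B_n$ belonging to $\mathcal{T}$.

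Next we extract a convergent subsequence. The tiles of $\mathcal{C}_n$ meeting a fixed disk $B_m$, with $m$ small relative to $n$, have parents' decompositions meeting $B_{\theta m + c}$, where $c$ is the maximal diameter of a macrotile. These tiles are determined up to finitely many choices: each such parent tile $t$ has $\tau t$ made of tiles of $\mathcal{T}$, so its position is pinned down by a tile of $\mathcal{T}$ together with its relative position inside $\tau t$. There are only finitely many prototiles, and each $\tau\alpha_i$ has finitely many children. So for each $m$ the restriction of $\mathcal{C}_n$ to tiles whose decompositions meet $B_m$ ranges over a finite set of patches. By a diagonal argument we find a subsequence along which these restrictions stabilize for every $m$. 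The union of the stabilized patches is a tiling $\mathcal{T}'$. Its decomposition is exactly $\mathcal{T}$, because the decomposition agrees with $\mathcal{T}$ on every $B_m$ and the children of $\mathcal{T}'$ cover the plane. Hence $\mathcal{T}'$ is a composition of $\mathcal{T}$.

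It remains to check that $\mathcal{T}'$ is a substitution tiling. Any finite part of $\mathcal{T}'$ lies in some stabilized patch, which is a subset of $\mathcal{C}_n$ for large $n$ in the subsequence, hence a subset of a shift of the supertile $S'_n$. Therefore $\mathcal{T}'$ is a $\tau$-substitution tiling. Iterating the construction gives an infinite sequence of compositions, so every substitution tiling is hierarchical.

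The main obstacle is the finiteness claim in the diagonal step, namely that the parent tiles covering a bounded region can be placed in only finitely many ways. We also need that $\mathcal{T}'$ covers the whole plane and is non-overlapping. The approach is to anchor each parent by a child tile of $\mathcal{T}$, which lies in a fixed, discrete set of positions because $\mathcal{T}$ is given. Non-overlap is inherited because each $\mathcal{C}_n$ is non-overlapping. Covering follows because every tile of $\mathcal{T}$ near $O$ has a parent in $\mathcal{C}_n$ for all large $n$. A minor technical point is ensuring $k_n\ge1$, and this is handled by taking $n$ large.
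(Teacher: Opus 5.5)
Your proposal is correct and is essentially the paper's argument. The paper also takes covers $S_n$ of growing finite parts of $\mathcal T$, uses that a fixed tile of $\mathcal T$ has only finitely many possible parents (determined by the parent's form and the tile's position in its decomposition), and thins the sequence by a diagonal argument so that every finite part of the limit $\mathcal T'$ lies in some $S_n$; you stabilize patches over the disks $B_m$ rather than parents of an enumeration $A_1,A_2,\dots$, and you check explicitly that the decomposition of $\mathcal T'$ is exactly $\mathcal T$.

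One small fix: $k_n\ge1$ for large $n$ holds simply because an order-$0$ supertile is a single tile, while $\mathcal T^{(n)}$ covers a disk of radius $n$. The claim $k_n\to\infty$ uses the upper bound $\mathrm{diam}\,\tau^k\alpha=\theta^k\,\mathrm{diam}\,\alpha$, not a lower bound. There is no need to ``replace $S_n$ with a larger supertile'', which is not always possible.
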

\begin{proof}
Let $\tau$ be a substitution and $\mathcal{T}$ a substitution tiling.
Call any supertile $S$ for which
$F\subset\tau S$
a \emph{cover} of a tiling $F$.
Any finite $F\subset \mathcal T$ has a cover. Indeed,
$F$ is included in some supertile and
the composition of that supertile is a cover of $F$.

Let
$A_1,A_2,\dots$  be an enumeration of all tiles from $\mathcal T$.  For each $n$
let $S_n$ denote  any cover  of the set $\{A_1,\dots, A_n\}$.
For $i\le n$, we call the tile $B\in S_n$ for which $A_i\in \tau B$  the \emph{parent of $A_i$ in $S_n$}. For $m>n\ge i$, the tile
$A_i$ may have different parents in $S_m$ and in $S_n$.
However, by removing some terms from the sequence $S_1,S_2,\dots$, we can
ensure that this does not happen, namely, that for all $m> n$
the parents of $A_n$ in $S_m$ and $S_n$ coincide.

This is done using a diagonal construction. First, note that
for any tile $A_i$, the set of all possible parents of $A_i$ is finite. Indeed, a parent of $A_i$
can be identified by its form and the location of  $A_i$ in its decomposition.

Now we choose any tile $B_1$ such that for 
infinitely many $i$ the tile $B_1$ is the parent  of $A_1$  in
$S_i$. Remove from the sequence
$S_ 1,S_2,\dots$ all tilings $S_i$ for which the parent of $A_1$ in
$S_i$ is different from $B_1$. Denote by
$S'_1,S'_2,\dots$
the resulting infinite sequence of tilings.
Fix the first member $S_1'$ in it, and
thin out the sequence $S'_2,S'_3,\dots$ so that $A_2$ has
the same parent in all its tilings.
Denote by
$S''_2,S''_3,\dots$
the resulting infinite sequence. And so on. The sought sequence is $S'_1,S''_2,S'''_3,\dots$

So, we can
assume that for all $m> n$
the parents of $A_n$ in $S_m$ and $S_n$ coincide.
Now we can construct a composition of the tiling $\mathcal T$.
This is the set
$$
\mathcal T'=\{\text{the parent of }A_n\text{ in }S_n\mid n\in\N\}.
$$
By construction, the decomposition of this set contains
all the tiles $A_1,A_2,\dots$. It remains for us to prove
that $\mathcal T'$ is a substitution tiling,
in particular, different tiles from $\mathcal T'$ do not overlap.

It suffices to prove that for all $n$ the set
of tiles
$$
\{\text{the parent of }A_1 \text{ in }S_1,
\text{the parent of }A_2 \text{ in }S_2,\dots,
\text{the parent of }A_n \text{ in }S_n\}
$$
is included in some supertile.
By construction, this set
coincides with the set
$$
\{\text{the parent of }A_1 \text{ in }S_n,
\text{the parent of }A_2 \text{ in }S_n,\dots,
\text{the parent of }A_n \text{ in }S_n\},
$$
which is included in the supertile $S_n$.
\end{proof}

The converse is not true in general: for some substitutions there are side-to-side-hierarchical tilings that are not 
substitution tilings. 
Note that under STS 
all substitution tilings are side-to-side.
Hence every substitution tiling is side-to-side-hierarchical (under STS).

To construct local rules for substitution tilings we need some sufficient
conditions for the converse statement to hold.

\begin{lemma}\label{l22}
  Assume that a sequence of tilings
  $\mathcal{T}_0=\mathcal{T}, \mathcal{T}_m, \mathcal{T}_{2m}, \dots$
  witnesses that the tiling   $\mathcal{T}$ is $\tau^m$-hierarchical. Here $m$ is any positive integer.
  Assume further that $\tau$ has FLC and that
  all crowns in all tilings
  $\mathcal{T}_{im}$ are $\tau$-allowed. 
  Then $\mathcal{T}$ is a $\tau$-substitution tiling. 
\end{lemma}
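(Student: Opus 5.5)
Fix a finite fragment $F\subset\mathcal T$; we must show that $F$ is contained in some $\tau$-supertile. The plan is to go up the hierarchy until a single tile of some $\mathcal T_{km}$ "almost covers" $F$, and then use allowed crowns to certify that the relevant neighbourhood is itself part of a supertile.

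\emph{Step 1 (choice of level).} Let $d$ be the maximal diameter of a prototile, and let $r>0$ be such that every point of the plane lies at distance at least $r$ from the boundary of some tile in each crown region around it. Tiles of $\mathcal T_{km}$ are $\theta^{km}$ times larger than tiles of $\mathcal T$, since $\tau^{km}$-decompositions are fine. So for $k$ large enough, $F$ lies within a ball of radius much smaller than the inradius scale $\theta^{km}r$. Take such a $k$ and write $\mathcal S=\mathcal T_{km}$. Then $F$ meets only tiles of $\mathcal S$ that share a common point. More carefully, the set of $\mathcal S$-tiles meeting the ball containing $F$ is contained in the union of the crowns of $\mathcal S$ at one vertex $V$, or is a pair of tiles sharing a side, or is a single tile. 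The justification is that a small ball meets at most the tiles around one vertex or one edge. Call this set $C$.

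\emph{Step 2 ($C$ is allowed).} If $C$ is a single tile, it is a prototile shift, a trivial supertile. Otherwise $C$ is contained in a crown of $\mathcal S$, either at $V$ or at an endpoint of the shared edge. By hypothesis this crown is $\tau$-allowed, hence a crown at a vertex of some supertile $\tau^{j}\alpha$. Then $\tau^{km}C\subset\tau^{km+j}\alpha$ is again a supertile. Moreover, $F\subset\tau^{km}C$, because $\mathcal T$ is the $\tau^{km}$-decomposition of $\mathcal S$ and every tile of $F$ is a child of a tile meeting the ball, i.e.\ of a tile in $C$. This proves $F$ is inside a supertile.

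\emph{Main obstacle.} The delicate point is Step 1: a ball of radius small compared with the tiles of $\mathcal S$ must meet only tiles sharing a common vertex. This can fail near a vertex $V$ of one tile lying in the interior of a side of another, where a small ball can meet tiles around two distinct nearby vertices if those vertices are close. FLC is exactly what fixes this. Since there are finitely many allowed crowns, there is a uniform constant $\delta>0$ such that in any allowed crown distinct vertices are at distance at least $\delta$. Scaling, at level $km$ the distance is at least $\theta^{km}\delta$. I would therefore choose $k$ with $\operatorname{diam}F\ll\theta^{km}\delta$ and argue as follows. If the ball meets two tiles, take the nearest vertex; every tile meeting the ball contains that vertex or the separating edge through it, so it belongs to the crown there. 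The only subtle case, two vertices closer than $\theta^{km}\delta$ lying in different crowns, is excluded by applying the minimal-distance bound to the crown at one of them, which contains the edge leading to the other. I would use FLC here (finiteness of crowns up to shift) to extract such uniform constants. The hypothesis that crowns at all intermediate levels are allowed is needed only at the single level $km$ chosen, so the argument is otherwise routine.
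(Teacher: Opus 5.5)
Your reduction is the same as the paper's: go up to a level $km$ at which $\theta^{-km}F$ is tiny, find one crown $C$ of $\mathcal T_{km}$ with $F\subset\tau^{km}C$, and use that $C$ is allowed. Your Step 2 is fine. The gap is in Step 1, which is the real content of the lemma; the paper isolates it as Lemma~7 and proves it in the appendix. The statement needed is: under FLC there is a uniform $\varepsilon>0$ such that, in any plane tiling all of whose crowns are allowed, every set of diameter $<\varepsilon$ is covered by a single crown. Your justification does not establish this, because the constant $\delta$ (minimal distance between distinct vertices of an allowed crown) does not control the geometry.

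Here is an example. Take a horizontal strip $0\le y\le w$ between two large tiles $A_1$ (below) and $A_2$ (above). Fill the strip with the thin parallelograms $P_j$ with vertices $(100j,0)$, $(100j+100,0)$, $(100j+150,w)$, $(100j+50,w)$. All distinct vertices are roughly $50$ or more apart, so $\delta\gtrsim 50$. Yet a ball of radius slightly above $w/2$ centred at $(75,w/2)$ meets $A_1$, $A_2$ and $P_0$. No crown contains these three tiles, since $A_1\cap P_0\subset\{y=0\}$ and $A_2\cap P_0\subset\{y=w\}$, so they have no common point. Thus $\operatorname{diam}F\ll\theta^{km}\delta$ is not enough. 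Your claim that ``every tile meeting the ball contains the nearest vertex or the separating edge through it'' fails here. An inradius-type constant would not rescue this for non-convex prototiles, since a long narrow neck produces the same effect. Likewise, your assertion that for two close vertices $V,W$ ``the crown at one of them contains the edge leading to the other'' is stated, not proved. Nothing forces a tile of the crown at $V$ to have $W$ as a vertex unless you already know that this crown covers a disc of uniform radius around $V$.

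What is missing is the right use of FLC, together with a prototile constant. Every crown of a plane tiling is complete, and there are finitely many complete allowed crowns. Hence there is $d>0$ such that the crown at any vertex $V$ covers the disc of radius $d$ about $V$; this is also what would make your close-vertices claim true. Next take $\eta>0$ depending only on the prototiles: a point on a side at distance $\ge d/2$ from the side's endpoints is at distance $\ge\eta$ from the rest of that polygon's boundary.

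With $\varepsilon=\min(\eta,d/2)$ the argument splits into two cases.
\begin{itemize}
\item If a set $S$ of diameter $<\varepsilon$ comes within $d/2$ of a vertex $V$, the crown at $V$ covers $S$.
\item Otherwise, unless $S$ lies in one tile, it passes from a tile $A$ into a tile $B$ at a point $E$ of a common side far from all vertices. The choice of $\eta$ then gives $S\subset A\cup B$, and $A$ and $B$ belong to the crown at an endpoint of their common segment.
\end{itemize}
In the strip example it is precisely $d,\eta\le w$ that push $\varepsilon$ below the strip width.
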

\begin{proof} We start with the following
\begin{lemma}\label{l7}
Assume that $\tau$ has FLC. Then there is $\eps>0$ (depending on $\tau$) with the following properties. 
(a) Let  $\mathcal T$ be a tiling of the plane with tiles from $P$
all of whose crowns are $\tau$-allowed.
Assume further that $S$ is a set on the plane 
of diameter less than $\eps$. 
Then $\mathcal T$ has a crown that covers  $S$.
(b) A similar statement holds when $\mathcal T$ is a macrotile. This time we claim that $\mathcal T$ has a crown that covers  $S\cap R$
where $R$ stands for the union of all tiles in $\mathcal T$.
\end{lemma}
The proof can be found in the Appendix.

 Let $F$ be an arbitrary finite fragment of $\mathcal{T}$. We have to  prove that it is included in some supertile.
 Let $A$ be a tile from $\mathcal{T}_{km}$. Then its $mk$-fold decomposition is a supertile of
order $km$ included in  $\mathcal{T}$. The length of its supersides is $\theta^{km}$
times larger than those of $A$. By Lemma~\ref{l7}(a) there is a crown in  $\mathcal{T}_{km}$ whose
$mk$-fold decomposition covers and hence includes $F$, provided $\eps\theta^{km}$
is larger than the diameter of $F$.

By assumption
that crown is allowed, that is, it is contained in some supertile $S$.
  It follows that $F$ is contained in the $km$-fold decomposition of the supertile $S$,
  which is also a supertile. Notice that the value of $k$
depends only on $\tau$ and the diameter of $F$. 
\end{proof}

\section{Under STS assumption}
\begin{theorem}\label{th22}
  Assume that all $\tau$-supertiles are side-to-side tilings (assumption STS).
  Then (a)  the family of  $\tau$-substitution tilings  is side-to-side sofic, (b)
  the family of $\tau$-side-to-side-hierarchical tilings is  side-to-side sofic,
and (c) the family of $\tau$-hierarchical tilings is  sofic.
(Later we will prove item (c) under a weaker assumption.)
\end{theorem}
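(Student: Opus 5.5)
I will follow the Fernique--Ollinger/Mozes scheme: decorate every tile with enough ``hierarchical information'' that the local rules force the existence of a composition whose decorated tiles again satisfy the same rules. I start with (b), then derive (a) from (b) together with Lemma~\ref{l22}, and (c) from (b) by a small modification.

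\emph{Decoration.} Replace $\tau$ by a power $\tau^m$ with $m$ large. Under STS this is harmless, since hierarchical tilings for $\tau$ and for $\tau^m$ coincide up to reindexing. The point of the power is that macrotiles become large relative to prototiles, so each macroside $\tau^m a$ splits into many small sides. A decorated tile $A'$ carries the following data:
\begin{itemize}
\item the form of its parent $B$ and the position of $A$ inside $\tau^m B$;
\item the form of $B$ itself in decorated form, i.e.\ the decoration of the parent;
\item on each side of $A$ lying on a macroside $\tau^m b$ of $B$, a label naming $b$, the position of that small side along $\tau^m b$, and the color of $b$ in $B$'s own decoration.
\end{itemize}
The decoration of the parent is transmitted through the children. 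Inside $\tau^m B$, neighboring children must agree on the identity and decoration of $B$, which is enforced by side colors on sides internal to $\tau^m B$. Along macrosides, two children of adjacent parents must agree on the color of the shared parent side. To keep the alphabet finite, the standard trick is to let a designated ``central'' child of each macrotile carry its parent's decoration as a function of its own decoration. Concretely, the decoration of $A$ is a tuple $(\text{form},\text{position in parent},\text{parent's side colors},\text{parent's position in grandparent})$, and the local rules at a central child force consistency between the child's data and the parent's. The recursive fixed-point issue is resolved as in Mozes/Goodman-Strauss: the information a tile must pass up is its position in the grandparent, which is finite data, and this closes the recursion.

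\emph{Proof of (b).} Soundness: any side-to-side hierarchical tiling, with its sequence of compositions $\mathcal T_i$, induces a decoration by reading off positions at every level, and all rules are satisfied. Completeness: given a proper decorated tiling, the side colors group tiles into blocks equal to $\tau^m B$ for well-defined $B$. The agreement of macroside colors together with STS, which makes macrotiles meet along whole macrosides, shows these blocks form a side-to-side tiling $\mathcal T_1$. The central-child data equip $\mathcal T_1$ with a decoration that is again proper, so by induction we obtain an infinite sequence of compositions. \textbf{The main obstacle} is this step. One must show that macrosides of adjacent blocks meet exactly, rather than sliding, and that the inherited decoration obeys the side-color rule across macrosides. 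Sliding is excluded because position labels along macrosides must match, and under STS the boundary of $\tau^m B$ is subdivided identically from both sides.

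\emph{Proof of (a).} Additionally forbid all crowns that are not $\tau$-allowed, by projecting each tile's level-1 data down. Since STS implies FLC, only finitely many crowns are involved. More precisely, each tile records the crown type of its parent's vertex, and the rules force these crowns to be allowed, so every $\mathcal T_{im}$ has only allowed crowns. Lemma~\ref{l22} then shows the projection is a substitution tiling. Conversely, every substitution tiling is side-to-side hierarchical with allowed crowns at all levels, by Lemma~\ref{th11} applied repeatedly, so it admits a proper decoration.

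\emph{Proof of (c).} For (c) the levels $\mathcal T_i$ need not be side-to-side, so I refine tiles by splitting their sides at all points where a neighbor's vertex may lie. Under STS the compositions live inside supertile structure, so there are only finitely many split patterns. I then apply the (b) construction to this refinement, using non-side-to-side local rules given by a finite set of illegal patches.
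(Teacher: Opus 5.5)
Your outline has the right architecture: decorate so that proper tilings compose into proper tilings, then use Lemma~\ref{l22} with allowed crowns for (a). But each of its three parts has a concrete gap.

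\textbf{The core mechanism is not constructed.} As stated, every child side on $\tau^m b$ stores the color of $b$ together with a position label. That makes the finite side alphabet contain a copy of itself times the position labels, which is impossible. Saying that ``the information passed up is the position in the grandparent'' does not repair this. The composed tiling must be decorated from the \emph{same} finite set, and its side colors must be recoverable from, and consistent with, the children's colors.

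The paper solves this as follows.
\begin{itemize}
\item Each parent side's color is split into five indices, and each index is carried by a single port.
\item Four of these indices are transported along disjoint paths to prints on the central child. This child is a large quasi-square whose $4N$ marked sides outnumber the sides of any prototile, so it has room for all the parent's data.
\end{itemize}
Even then, closure under composition fails for the naive notion ``has an infinite chain of parents''. A normal tile of a cyclic central type is its own parent, so arbitrary garbage on its prints survives and yields parasitic tilings. This is why legality additionally requires borrowed indices without an origin to be zero. It is also why proving that the composed tile $D$ is legal needs the Composition Lemma, the $\Phi$-multiset Lemma~\ref{l8}, and the separate count ($4N-Q>2Q$) for central acyclic types. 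Your ``main obstacle'' paragraph addresses only sliding along macrosides, not any of this.

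\textbf{For (b), replacing $\tau$ by $\tau^m$ is not harmless.} $\tau$- and $\tau^m$-hierarchical tilings coincide, but \emph{side-to-side} ones need not. STS controls adjacencies only inside supertiles. Two tiles adjacent across a fault in $\mathcal T_{(j+1)m}$ can have $\tau^m$-subdivisions of their common side that agree while the $\tau^i$-subdivisions, $0<i<m$, disagree. Think of a side cut as $[2,1]$ by $\tau$ on one side and $[1,2]$ on the other, but uniformly by $\tau^2$ on both. Absolute positions along $\tau^m b$ therefore certify only side-to-side $\tau^m$-hierarchy. The paper instead uses the tuple $\langle n_1,\dots,n_m\rangle$ of \emph{relative} positions at each intermediate level, together with the leftmost-side argument, to force every intermediate composition to be side-to-side.

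\textbf{Your plan for (c) fails outright.} Hierarchical tilings may contain infinite fault lines with arbitrary real offsets. For example, take the $3\times 3$ square substitution, which satisfies STS, and shift the upper half-plane grid by any $s\in\R$. Both half-planes compose at every level, so the tiling is hierarchical. Hence the set of ``split patterns'' is uncountable, and no finite refinement exists. The claim that compositions ``live inside supertile structure'' holds only for substitution tilings. The paper handles this differently: sides that remain outer at every level get a trivial color (fourth index $0$), and exactly these are exempted from the side-to-side matching rule, so they may slide.
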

In the rest of this section we prove this theorem.
We start with the proof of (a).
We will construct our set of decorated tiles in four steps:
\begin{enumerate}
\item First, we add to $P$ some prototiles obtained by joining
some original tiles, and denote the resulting set by $P'$.

\item Second, we decorate the tiles of $P'$, obtaining a set of colored tiles $P''$.
On the tiles of this set, we consider a certain substitution $\sigma''$,
which will act as some power $\tau^m$ of the original substitution,
ignoring the decoration of  tiles.

\item We then remove some tiles
from $P''$, obtaining the set of decorated tiles $P'''$.
\item Finally, we dissect each tile of $P'''$ that was added during the transition from $P$ to $P'$ back into its original tiles, coloring the new sides by new unique colors.
\end{enumerate}

\subsection{The tile set $P'$}\label{s3.1}
Let $P=\{\alpha_1,\dots,\alpha_M\}$ and $\tau$ be the given prototile set and substitution.
We add some new prototiles to $P$, each of which is the union of the original
prototiles. Then we define some substitution $\sigma'$ on it.
We will use $\Delta$ to denote the operation of \emph{scattering} the added tiles:
$\Delta \alpha$ for $\alpha\in P'$ is the tiling consisting of the tiles that make up
$\alpha$.
The substitution $\sigma'$ will be \emph{compatible} with some power
$\tau^m$ of the original substitution, in the following sense: for any tile $\alpha\in P'$,
$\Delta\sigma'\alpha=\tau^m\Delta\alpha$. That is, we have $\sigma'\alpha=\Delta^{-1}\tau^m\Delta\alpha$ in a sense.

To do this, consider a sufficiently large power $\tau^k$ of the original substitution (how large will become clear later).
In each $\tau^k$-macrotile, strictly inside
it, we mentally draw a grid of a sufficiently large number $(N+2)\times (N+2)$ of sufficiently large identical squares.
The grid for $N=6$ is shown in green in
Fig.~\ref{f418}.
The side length $R$ of the squares must be significantly
larger than the maximal size of the prototiles.

From the central $N\times N$ squares we form one \emph{large square}. 
\begin{figure}
\begin{center}
\includegraphics[scale=.9]{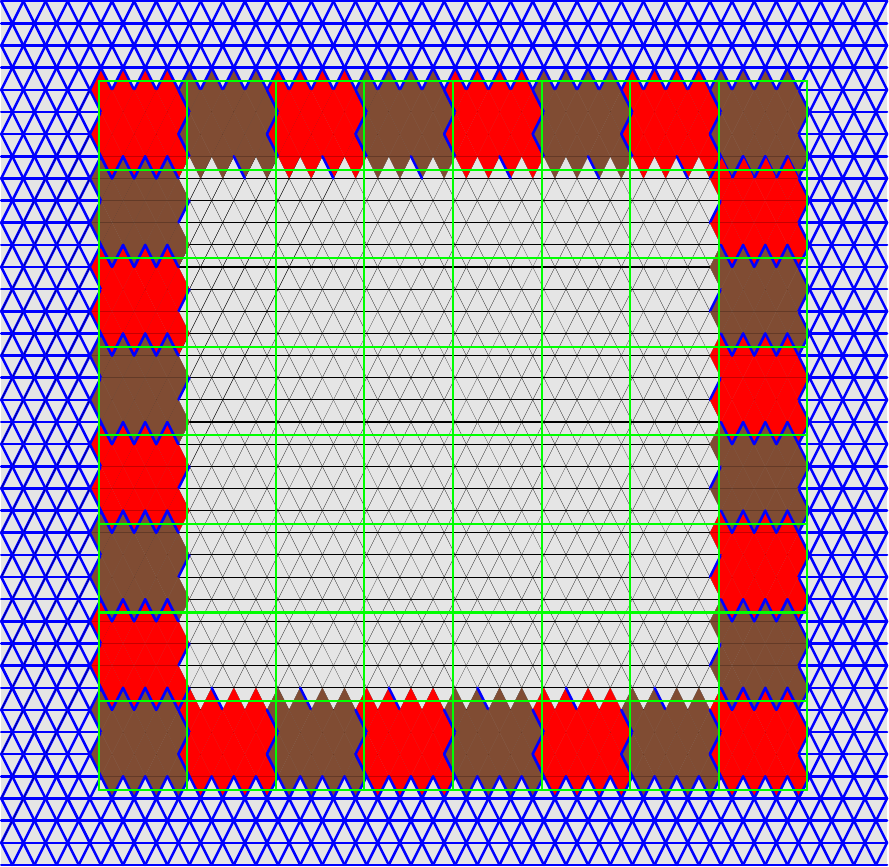}
\end{center}
\caption{The central part of a macrotile consisting of triangular tiles.
The large quasi-square is grey. Small quasi-squares are red and brown. 
Marked sides are blue (all sides except some sides of the large quasi-square).}\label{f418}
\end{figure}
We assign each tile that shares an inner point with at least one of the squares
to any of those squares. For each square, we join all  the tiles  assigned to that square. The resulting
tiles will be called \emph{quasi-squares}. 
We consider each of the quasi-squares to be a single new prototile.

In Fig.~\ref{f418}, the small quasi-squares are colored red and brown. Inside the ring of
small quasi-squares is the large quasi-square. Outside the ring of
small quasi-squares are the original tiles.

Thus, the tile set $P'$ consists of the original prototiles, small quasi-squares, and large
quasi-squares. In total, the set contains $M$ original prototiles, $M$ large
quasi-squares, and $M((N+2)^2-N^2)=4M(N+1)$ small quasi-squares.
Now we will decorate  the tiles of $P'$ in some way.

\subsection{The general plan for constructing the decoration of $P$}

To prove Theorem~\ref{th22}(a), it suffices to prove the following 
\begin{prop}\label{th5} There exists a side decoration
of tiles of $P'$ such that a tiling $\mathcal T$ with tiles of the original set $P$ is a $\tau$-substitution tiling 
if and only if $\mathcal T=\Delta\pi\mathcal T'$ for some proper tiling 
$\mathcal T'$ with  decorated tiles (the local rule stipulates that tiles can be connected only side-to-side so that
colors of shared sides match).
\end{prop}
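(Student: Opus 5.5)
The plan is a Mozes / Goodman-Strauss style self-simulation, combined with the Fernique--Ollinger idea of encoding the parent in the colors of the sides. Each tile of $P'$ will carry its own type together with the type and relative position of its parent under $\sigma'$ (compatible with $\tau^m$). The quasi-squares will serve as "wires" and "processors" that carry and check this information, so that $\sigma''$ can act as a self-similar substitution on decorated tiles.

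\textbf{Construction.} First fix $k$ and the grid, and form the substitution $\sigma'$ on $P'$ with $\Delta\sigma'\alpha=\tau^m\Delta\alpha$. Concretely, $\sigma'$ is obtained from $\tau^m\Delta\alpha$ by re-gluing, inside every $\tau^k$-supertile occurring there, the tiles into the quasi-squares. This needs $m\ge k$ and the grid strictly inside, so that the gluing is consistent.

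Then decorate each tile $A$ of $\sigma'\alpha$ as follows.
\begin{enumerate}
\item Each side of $A$ gets a color recording the parent's type $\alpha$, the location of $A$ inside $\sigma'\alpha$, and the side label.
\item Macrosides of $\sigma'\alpha$ get additional colors encoding the color of the corresponding side of the decorated parent. This information is propagated along the boundary children: the "marked" sides carry the parent's side color, and the small quasi-squares relay it between the macroside and the large quasi-square.
\item The large quasi-square is the place where the parent's full decorated type is written. Its side colors must be consistent with the colors transmitted along the small quasi-square ring.
\end{enumerate}
The point of the large number $N$ and large $R$ is that the ring of small quasi-squares has enough sides to route, to the macroside boundary, all the color information about the parent's sides. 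Meanwhile the large quasi-square stores the parent's identity. Since the set of colors is finite, this is consistent.

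Define $P''$ as the set of all such decorated tiles, and $\sigma''$ as the decorated substitution. Define $P'''$ by removing tiles that never occur in $\sigma''$-supertiles. The side-to-side condition is guaranteed by STS.

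\textbf{Soundness.} Suppose $\mathcal T'$ is a proper tiling. Show that the colors force a unique $\sigma''$-composition, which is again proper. The large quasi-squares determine the parents, and the matching colors across shared sides force the children of each parent to be arranged exactly as in $\sigma''$ of that parent. Iterating gives a $\tau^m$-hierarchy $\Delta\pi\mathcal T'=\mathcal T_0,\mathcal T_m,\dots$.

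Local rules only check crowns of $P'''$-tiles. Since every decorated crown appearing in a proper tiling is legal, and legal crowns are crowns of supertiles, all crowns in every $\mathcal T_{im}$ are $\tau$-allowed (here STS gives FLC). Lemma~\ref{l22} then shows that $\Delta\pi\mathcal T'$ is a $\tau$-substitution tiling.

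\textbf{Completeness.} Conversely, given a $\tau$-substitution tiling, repeatedly apply Lemma~\ref{th11} to get compositions $\mathcal T_0,\mathcal T_m,\dots$, each of which is a substitution tiling. Regroup the tiles into quasi-squares using the $\tau^k$-ancestors. Decorate top-down by a compactness argument: decorations of finite parts come from decorated supertiles, and a diagonal limit yields a proper decorated tiling.

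\textbf{Main obstacle.} I expect the hardest step to be the rigidity/unique-composition argument in soundness. One must show that from local color matching alone, every tile lies in exactly one copy of $\sigma''$ of some decorated parent, and that these parents themselves tile the plane side-to-side with matching colors. This in turn requires that the decorated macrosides inherit exactly the parent's side colors. I would first try an argument through the large quasi-squares: each large quasi-square is surrounded by a rigid ring forced by colors. I would then extend outward to the original tiles along macrosides using the "marked side" colors, and handle corner regions separately.
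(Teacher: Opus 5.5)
Your outline follows the paper's general plan: quasi-squares, parent data relayed to the central tile, composition, then Lemma~\ref{l22}. However, two ingredients that carry the ``if'' direction are missing.

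\textbf{Allowed crowns are never enforced.} You say the local rules ``only check crowns'' and that ``legal crowns are crowns of supertiles''. Neither is available.

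\emph{What the rule sees.} The rule only compares the colors of two tiles across one shared side. At a vertex $V$ on the boundary of a $\sigma'$-macrotile, the crown consists of tiles from several macrotiles, none of which knows the others. Your decoration (parent type and position, parent side colors relayed to the central tile) can certify at best that $\Delta\pi\mathcal T'$ has an infinite chain of $\tau^m$-compositions. Even under STS this is strictly weaker than being a substitution tiling.

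\emph{A counterexample.} Take a red and a blue unit square, each replaced by a $2\times2$ block of its own color. The tiling with a red left half-plane and a blue right half-plane is side-to-side hierarchical, and the line $x=0$ lies on macrotile boundaries at every level. The only data outer sides can consistently carry there are the position along the macroside and relayed colors of parent sides lying on the same line. These coincide on both sides at every level, so your rules accept this tiling, although its crowns on $x=0$ are not $\tau$-allowed. Separately, outer sides cannot record the parent's type and the child's location as you propose: the two neighbours have different parents and colors must be equal, which is why the paper puts identity index $0$ there.

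\emph{The missing idea: the crown index.} Each side with an endpoint $V$ on the macrotile boundary carries a guessed $\tau$-allowed crown at $V$ containing the tile. The Crown Consistency Property makes a tile's guesses on its two sides through $V$ agree, and properness makes guesses agree across shared sides. Going around $V$, the actual crown therefore equals the guessed allowed one. Only then are all crowns at all levels allowed, so Lemma~\ref{l22} applies. Dropping this index is exactly how the paper obtains side-to-side hierarchical tilings in Theorem~\ref{th22}(b).

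\textbf{Legality of composed tiles.} Keeping the tiles that occur in supertiles of every order (i.e.\ that have an infinite chain of ancestors) is the naive rule the paper explicitly rejects. A normal tile of a cyclic central type is its own parent, so it survives with arbitrary fake parent data.

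Your step ``iterating gives a hierarchy'' needs every composed tile $D$ to lie in $P'''$ again, and you give no argument for this. When $D$ is of a central acyclic type, this cannot be read off its macrotile. Its central child $A$ is of cyclic type, so $A\in P'''$ says nothing about $D$. A parent of $D$ is only produced at the next level, where the same question about $D$ arises again.

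The paper breaks this circle by requiring that borrowed indices without origin be zero. $A$ carries a nonzero first index inherited from its non-central grandparent, so the ancestry of $A$ must reach a non-central tile. This yields a legal parent $A_1$ of $A$ of the same form as $D$. Then $A_1=D$ is proved by comparing the multisets $\Phi$ of first indices: Lemma~\ref{l8} when $D$ is non-central, and a count based on $4N>3Q$ when it is central.

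So the real obstacle is not uniqueness of the composition, which identity indices on inner sides give at once. It is showing that composed tiles are legal.
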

Why is this sufficient? Because we can apply additional colors (we will call them \emph{glues}) to the sides of the colored tiles participating
in each of the added tiles $\alpha$ so as to force
tiles with these colors in a proper tiling to assemble
into the tile $\alpha$ (for each of the added tiles $\alpha$, these will be unique colors).
The resulting set of prototiles $\tilde P$ is the sought one:
\begin{lemma}\label{claim-prop1}
A tiling $\mathcal T$ with tiles from $P$ is a $\tau$-substitution tiling iff $\mathcal T=\pi\tilde{\mathcal T}$
for some proper tiling $\tilde{\mathcal T}$ with tiles from $\tilde P$.
(The proof of this almost obvious lemma is in the Appendix.)
\end{lemma}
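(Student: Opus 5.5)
We derive Lemma~\ref{claim-prop1} from Proposition~\ref{th5} by checking that the glues exactly simulate the added tiles. First we fix the construction of $\tilde P$. Take the side decoration of $P'$ given by Proposition~\ref{th5}. For every decorated tile $\beta$ whose projection is an added tile (a quasi-square), we cut $\beta$ back into the original tiles of $\Delta\pi\beta$. Each resulting piece keeps, on the parts of its sides lying on the boundary of $\beta$, the colors that $\beta$ had there. A boundary side of $\beta$ may consist of several sides of pieces; we give each such piece-side a color recording both the original color and its position along the side of $\beta$. Each inner side shared by two pieces of $\beta$ gets a fresh glue color, unique to the pair ($\beta$, that inner side). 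Decorated tiles of $P'$ that are original tiles are kept as they are. The projection $\tilde P\to P$ forgets all colors, and the local rule is again side-to-side matching of colors.

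Next we prove the ``only if'' direction. Let $\mathcal T$ be a $\tau$-substitution tiling. By Proposition~\ref{th5}, $\mathcal T=\Delta\pi\mathcal T'$ for a proper $\mathcal T'$. We replace each decorated quasi-square of $\mathcal T'$ by its colored pieces to get $\tilde{\mathcal T}$, and check that $\tilde{\mathcal T}$ is proper. Inner sides match by construction, since the two pieces carry the same glue. On boundary sides, two adjacent decorated tiles of $\mathcal T'$ met side-to-side with equal colors. Their subdivisions along that common side must therefore be positioned compatibly, so the position-indexed colors agree. This is where we use STS, which guarantees that the pieces' sides along the common side coincide; failing that, we record the subdivision data in the colors.

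Finally, the ``if'' direction. Let $\tilde{\mathcal T}$ be proper. A tile carrying a glue of $\beta$ must be adjacent, across that side, to the unique other piece with the same glue, placed in the same relative position as in $\beta$, because the glue fixes the side and hence the shift. The pieces of $\beta$ form a connected adjacency graph, since $\beta$ is a union of tiles connected through shared sides. By induction along this graph, all pieces of a single copy of $\beta$ are present and correctly positioned. Distinct copies are disjoint, since glues identify a unique partner. Reassembling every such group into $\beta$ gives a tiling $\mathcal T'$ with decorated $P'$-tiles and $\Delta\pi\mathcal T'=\pi\tilde{\mathcal T}$. Its outer sides inherit matching colors from the pieces, so $\mathcal T'$ is proper, and Proposition~\ref{th5} shows that $\pi\tilde{\mathcal T}$ is a substitution tiling.

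The main obstacle is the boundary bookkeeping in the two directions. In the ``if'' direction, reassembled tiles must meet side-to-side with whole sides matching, not just pieces of sides. In the ``only if'' direction, the subdivisions of a shared side must agree. I would try to settle both using side-to-side-ness of the decomposition together with position-indexed colors.
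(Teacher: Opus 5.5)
Your proposal is correct and follows the paper's own argument: take $\mathcal T'$ from Proposition~\ref{th5}, scatter its added tiles and put unique glues on their inner sides; conversely, reassemble the glued pieces into added tiles and apply Proposition~\ref{th5} again. The boundary bookkeeping you single out as the main obstacle is vacuous, and the position indices can be dropped: in the paper a side of a quasi-square is by definition a side of one of its constituent original tiles lying on its boundary (a whole side, by STS), so each side of $\beta$ is a single piece-side with a single color, and side-to-side matching of the pieces along the boundary of $\beta$ is exactly side-to-side matching of the sides of $\beta$.
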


To prove Proposition~\ref{th5}, we first define a substitution
$\sigma'$ on $P'$.

\subsection{Substitution $\sigma'$ on $P'$}

Consider first the following substitution $\sigma$. When applied
to the tile $\alpha\in P$, it yields $\tau^k\alpha$, and then combines some tiles in the macrotile
$\tau^k\alpha$ into small and large quasi-squares as described above, that is,
$\sigma\alpha=\Delta^{-1}\tau^k\Delta\alpha$.
Now we extend  substitution $\sigma$ to prototiles from $P'\setminus P$.
The substitution $\sigma$ is applied to a quasi-square $\alpha\in P'\setminus P$ as follows:
We disperse this quasi-square into its original tiles, then apply $\sigma$ to each of them, and then simply combine the resulting 
$\tau^k$-macrotiles into a single tiling:
$\sigma\alpha=\bigcup_{\beta\in\Delta\alpha}\sigma\beta$.

Now, in each $\sigma$-macrotile, we choose a \emph{central tile}.
For $\alpha\in P$, the central tile in $\sigma\alpha$
is defined as the large quasi-square in $\sigma\alpha$.
For $\alpha\in P'\setminus P$, the central tile in $\sigma\alpha$
is defined as the large quasi-square in $\sigma\beta$, where $\beta$ is the upper-left (say) tile
of $\Delta\alpha$.

Further, we will need  that for every large quasi-square
$\alpha\in P'$, the central tile in $\sigma\alpha$ coincide with $\alpha$.
We can achieve this property using the following lemma.

\begin{lemma}\label{l23}
Let $f$ be any function from a finite set $S$ to itself.
Then for some integer $l>0$ we have $f^{2l}=f^l$.
\end{lemma}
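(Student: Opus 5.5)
We will prove the statement by analyzing the functional graph of $f$ and choosing $l$ to be a common period that also exceeds all preperiods. Let $n=|S|$. For every $x\in S$ consider the orbit $x, f(x), f^2(x),\dots$. Since $S$ is finite, the orbit eventually becomes periodic. That is, there are integers $t_x\ge 0$ (the preperiod) and $p_x\ge 1$ (the period) such that $f^{i+p_x}(x)=f^i(x)$ for all $i\ge t_x$. Moreover $t_x+p_x\le n$, because the elements $x,f(x),\dots,f^{t_x+p_x-1}(x)$ are pairwise distinct.

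Now choose $l$ to be a positive multiple of every $p_x$ that is at least every $t_x$. A concrete choice is $l=n!$: each $p_x\le n$ divides $n!$, and $n!\ge n\ge t_x$. The degenerate case $S=\emptyset$ is trivial, since then any $l$ works.

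We then verify $f^{2l}=f^l$ pointwise. Fix $x$. Since $l\ge t_x$, the element $f^l(x)$ lies on the cycle of the orbit of $x$. Since $p_x$ divides $l$, applying $f$ another $l$ times returns to the same point: $f^{l+l}(x)=f^l(x)$. Formally, iterate the identity $f^{i+p_x}(x)=f^i(x)$ for $i=l,\,l+p_x,\,\dots$ a total of $l/p_x$ times. Hence $f^{2l}(x)=f^l(x)$ for all $x$, which proves the lemma.

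There is no real obstacle. The only point needing care is that $l$ must satisfy both conditions at once: it must reach the cycle ($l\ge t_x$) and be compatible with every period ($p_x\mid l$). The choice $l=n!$ handles both simultaneously.

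In the intended application, $S$ is the finite set of large quasi-squares in $P'$ and $f(\alpha)$ is the central tile of $\sigma\alpha$. Replacing $\sigma$ by $\sigma^l$ (and $k$ by $kl$) then makes $f$ idempotent on the image of $f^l$, which gives the property needed: the central tile of $\sigma\alpha$ coincides with $\alpha$ for every large quasi-square $\alpha$.
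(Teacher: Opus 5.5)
Your proof is correct. It uses the same basic idea as the paper: iterates of a map on a finite set are eventually periodic, so take $l$ to be a multiple of the period that is at least the preperiod. The difference is the level at which you apply it.

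The paper works globally in the finite set $S^S$. The sequence $f^0,f^1,f^2,\dots$ must repeat, giving $f^n=f^{n+l}$ for some $n$ and some $l>0$. Replacing $l$ by a multiple so that $l\ge n$ and shifting by $i=l-n$ yields $f^{2l}=f^l$. You work pointwise on orbits instead, so you must synchronize all points at once. That is why you need $l$ to be a common multiple of all the cycle lengths $p_x$ and at least every preperiod $t_x$, which $l=n!$ achieves.

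What each route buys:
\begin{itemize}
\item The paper's argument is shorter and uses nothing about $f$ beyond its lying in a finite semigroup; it is the standard fact that some power of any element of a finite semigroup is idempotent.
\item Yours gives an explicit $l=|S|!$ that does not depend on $f$: $f^{2\cdot|S|!}=f^{|S|!}$ holds for every self-map of $S$.
\end{itemize}

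Two small remarks.
\begin{itemize}
\item The bound $t_x+p_x\le n$ holds for the \emph{least} preperiod and period, since only then are $x,f(x),\dots,f^{t_x+p_x-1}(x)$ pairwise distinct. You should fix those choices explicitly.
\item In your closing remark, $\cen^l(\alpha)=\alpha$ holds exactly for $\alpha\in\im\cen^l$, not for every large quasi-square. This is why the paper discards the large quasi-squares outside $\im\cen^l$. Neither point affects the lemma itself.
\end{itemize}
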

\begin{proof}
Indeed, the number of functions mapping $S$ to $S$ is finite. Therefore 
for some $n$ and $l>0$ we have $f^n=f^{n+l}$.
It follows that $f^{n+i}=f^{n+l+i}$ for all $i\ge 0$.
If $l\ge n$, then we set $i=l-n$ and obtain $f^{l}=f^{2l}$.
It remains to note that $l$ can be made arbitrarily large,
since the equality $f^n=f^{n+l}$ implies the equalities $f^n=f^{n+l}=f^{n+2l}=f^{n+3l}=\dots$.
\end{proof}

Consider the function $\cen:P'\to P'$, \label{centr} which maps every prototile $\alpha\in P'$ to the central prototile in
$\sigma\alpha$. By Lemma~\ref{l23},
there exists $l>0$ such that $\cen^{2l}=\cen^{l}$. Let $m=kl$ and $\sigma'=\sigma^l$.
In $\sigma'$-macrotiles, we choose central
tiles recursively --- as central tiles in the images of central tiles, and so on.

Now, in each $\sigma'$-macrotile, we scatter all large quasi-squares that are not central tiles back onto the tiles of $P$
and also scatter small quasi-squares that do not border central tiles.
Now we further remove from $P'$ all unnecessary large quasi-squares; that is, we keep only those large quasi-squares that belong to the image set $\im\cen^l$.
Since in any macrotile the central quasi-square belongs to $\im\cen^l$, no central quasi-square will be removed from any macrotile.

Since
$\cen^{2l}=\cen^{l}$, for any large quasi-square $\alpha\in P'$
the central tile in $\sigma'$-macrotile $\sigma'\alpha$ coincides with $\alpha$.
Indeed, let  $\alpha\in P'$ be a large quasi-square. By the restriction, $\alpha\in \im \cen^l$,
so $\alpha=\cen^l(\beta)$ for some $\beta$ (in old $P'$). Then 
$\cen^l(\alpha)=\cen^{2l}(\beta)=\cen^{l}(\beta)=\alpha$.

So, we have constructed a substitution $\sigma'$ on the set $P'$ that is consistent with some power $\tau^m$
of the original substitution ($\tau^m\Delta\alpha=\Delta\sigma'\alpha$). Furthermore,
each $\sigma'$-macrotile $\sigma'\alpha$ contains a single large quasi-square,
and it coincides with $\alpha$ if $\alpha$ is itself a large quasi-square. 

Next, we will need
the so-called \emph{labeling} of $\sigma'$-macrotiles.

\subsection{Labeling of $\sigma'$-macrotiles}

\subsubsection{Marked sides of prototiles from $P'$}\label{smarked}

Let a $\sigma'$-macrotile $\sigma'\alpha$ be given.
We
\emph{mark} some sides in $\sigma'\alpha$.
In each large quasi-square $L$, for each adjacent small quasi-square $S$, we mark
the middle side among all sides shared by $S$ and $L$  (the total number of such sides is  $4N$).
Also we mark all sides that do not belong to 
large quasi-squares (see Fig.~\ref{f418}). Whether a side of a tile is marked depends only
on the form of the tile and does not depend on the macrotile or the location within the macrotile.
Indeed, this could be violated only for quasi-squares, and quasi-squares occur
only near the centers of macrotiles inside macrotiles $\tau^m\alpha$ for original tiles $\alpha$. 

The number $N$ should  be chosen so
that the number
of marked sides of each large quasi-square is more than three times
$$
Q=\text{the maximal number
of sides of the original prototiles and of small quasi-squares},
$$
that is, $4N>3Q$. The weaker inequality $4N\ge Q$ is all we need in order to choose the prints
below; the stronger one will be used once, in the proof of Proposition~\ref{th2}. Note that both
are easy to satisfy, since $Q$ depends only on the original prototiles and on the size $R$ of the
grid square, and $N$ is chosen after $R$ (see the proof of Lemma~\ref{l8} below).

\subsubsection{Ports and paths in macrotiles}

For each marked side $a$ of prototile $\alpha\in P'$, we choose some marked side $b$ of the central tile
in the macrotile $\sigma'\alpha$, called the \emph{print} of $a$.
The prints must be chosen so that when going around the border of the tile $\alpha$, the sides go in the same order as their prints when going around the border of the central tile in the macrotile $\sigma'\alpha$.
In particular, the print map is injective: distinct marked sides of $\alpha$ have distinct prints.
This is possible, since the number of marked sides of the central tile is at least $Q$.
 Recall that, 
if the prototile $\alpha$ is a large quasi-square, then the central tile in $\sigma'\alpha$ coincides with $\alpha$. In this case, we choose $a$ itself as the print of $a$.


Let a macroside of the macrotile $\sigma'\alpha$ for $\alpha\in P'$ contain $L$ sides.
Consider the sides on it
numbered $\lceil L/5\rceil,\lceil 2L/5\rceil,\lceil 3L/5\rceil,\lceil 4L/5\rceil, L$, counting from
left to right, and call them \emph{ports} (the number $L$ need not be divisible by~5; any fixed
rounding convention will do, and $L$ is large, so these five numbers are distinct).
Obviously, if two macrotiles are adjacent macroside-to-macroside and
side-to-side, then each port is adjacent to a port with the same number.

\begin{definition}
A \emph{path} in a $\sigma'$-macrotile is a sequence of sides $\{a_1,\dots,a_k\}$
of its tiles such that sides $a_i,a_{i+1}$ belong to the same tile for all $i$.
A path \emph{passes through a side} if this side belongs to this path.
\end{definition}
\begin{figure}
\begin{center}
\includegraphics[scale=.8]{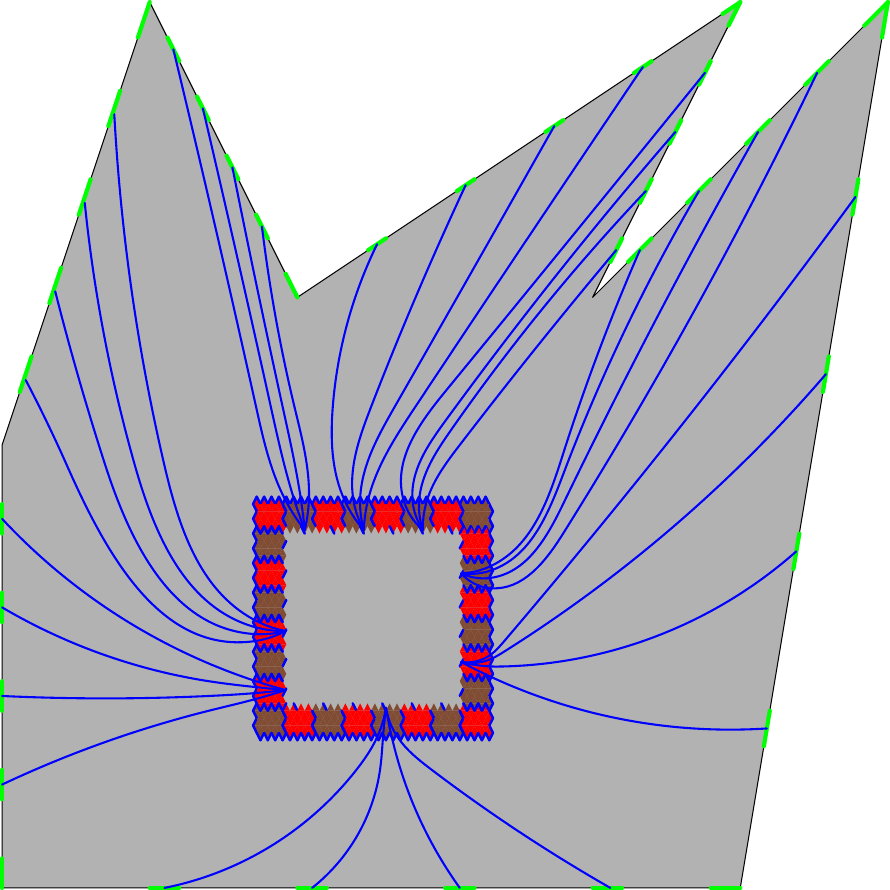}
\end{center}
\caption{Ports and paths in a macrotile. The ports are colored green.
Paths connecting the ports to the prints of the sides of the parent  prototile are drawn in blue.
}\label{f419}
\end{figure}
For each marked side $a$ of $\alpha$ and for each $i\le 4$ we choose a path
that starts at the $i$th port on the macroside $\sigma' a$, ends in 
the print of $a$ and contains  marked sides only. See
Fig.~\ref{f419}, 
where these paths
are drawn in blue. The paths are denoted by $P_1(a), P_2(a), P_3(a), P_4(a)$.

Note that there is no genuine path $P_5(a)$ made of several sides; this is because there will be no need to reconcile the information in the fifth port with the information in the central tile. For the same reason, for a non-marked side $a$ there are no genuine paths at all. Nevertheless, to simplify future definitions, it is convenient to let every side $a$ have five paths $P_1(a), P_2(a), P_3(a), P_4(a), P_5(a)$. To this end, we \emph{define} $P_5(a)$ for a marked side $a$ to consist of a single side --- the fifth port; and we \emph{define} $P_i(a)$ for a non-marked side $a$ and for $i\le 5$ to consist of a single side --- the $i$th port.

We need that
each side of each tile, with the exception
of the sides of the central tile, belong to at most
one path. 
We also require that a path contain no side of the central tile except its last side, that is,
except the print at which it ends. Since the marked sides of the central tile are exactly the
sides it shares with the adjacent small quasi-squares, this means that the side of the path
$P_i(a)$ that precedes the print of $a$ belongs to the small quasi-square sharing that print with
the central tile; that side is a non-special side of that small quasi-square, because a small
quasi-square has exactly one special side.
If the power $\tau^k$ is chosen large enough, then it is possible
to choose such paths.

Now we can begin coloring the prototiles in $P'$.

\subsection{Decoration of the set $P'$}\label{s43}

\subsubsection{Types and indices}
We will call tiles in the union of the $\sigma'$-macrotiles $\sigma'\alpha, \alpha\in P'$, the \emph{$\sigma'$-types} (we assume that these macrotiles are disjoint). We denote the set of all types by $\types$.
\begin{center}
\includegraphics[scale=1.]{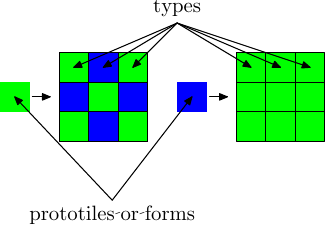}
\end{center}
We will distinguish
four kinds of sides of $\sigma'$-types:
\begin{enumerate}
\item Sides lying on the boundary of the macrotile are called \emph{outer sides}. All other
  sides are called \emph{inner}. 
\item \emph{Interior sides} are inner sides
whose both ends do not lie on the boundary of the macrotile.
\item  \emph{Special} sides are marked sides of central types and also marked sides of
small quasi-squares shared with the large quasi-square.
\item Inner sides that have one or two ends on the boundary 
  are called \emph{boundary sides}.
\end{enumerate}
Note that if two types in a macrotile share a side,  then this side is of the same kind 
in those types.

The color of each side of a tile will be a 5-tuple 
of  so-called \emph{indices}. Now we introduce those indices.
Let $s$ be a type in $\sigma'\alpha$.

\begin{itemize}
\item The \emph{identity} index of a side $b$ of type $s$ is defined as follows:
\begin{itemize}
\item \label{redind} If $b$ is an inner side of type $s$,
then let $r$ denote the type
of the tile from $\sigma'\alpha$ that lies in $\sigma'\alpha$ on the other side of $b$.
Then the identity index
is equal to the ordered pair $\pair{s,r}$ or $\pair{r,s}$
depending on whether $s$ is to the left or right of $b$.
If side $b$ is horizontal, then the top tile is considered the left one.

\item
If $b$ is an outer side of $s$, then this index is 0.

The 
identity index of a side $b$ of type $s$
is denoted by $I_s(b)$.
\end{itemize}
Identity indices will cause the tiles of a proper tiling
to be assembled into $\sigma'$-macrotiles.
\item The alignment index of a side $b$ of type $s$ is defined as follows:
If $b$ is an outer side of type $s$,
then the alignment index is defined as the number of the side in left-to-right order
along the macroside $\sigma' a$ that $b$ belongs to.
Otherwise, it is zero. Alignment indices force
$\sigma'$-macrotiles to be connected macroside-to-macroside.
The alignment index of a side $b$ of type $s$
is denoted by $A_s(b)$.
\item
The  \emph{crown} index of a side $b$ of type $s$ is a list
of at most two
$\tau$-allowed crowns, defined as follows.
If side $b$ has no vertices
on the boundary of the macrotile, then the crown index is zero (the list is empty).
If side $b$ has one vertex $V$ on the boundary, then
the list consists of a single crown that includes $s$. If $s$ is a quasi-square, then this means that the
crown must include all original tiles from $\Delta s$ that include the vertex $V$. 
If side $b$ has two vertices $V,V'$ lying on the boundary, then
the list consists of two crowns $C,C'$, first for the left vertex, then for the right one, 
such that both $C,C'$ include $s$. 

The set of possible crown indices of side $b$ of type $s$
is denoted by $C_s(b)$.
\end{itemize}

Now we construct some intermediate set of colored tiles $P''$,
from which we then remove some tiles to obtain the final set $P'''$.

\subsubsection{Colored set $P''$}
\newcommand{\ind}{\textit{ind}}

On each side of each tile in the set $P''$, five indices will be written
numbered $1,\dots,5$ and ranging through the set
\begin{align*}
U=\types\cup(\types\times\types)\cup\{0,1,\dots,K\}\cup\crown\cup(\crown\times\crown)
\cup\{1,\dots,K\}^m\cup\{\undef\},
\end{align*}
where $\types$ denotes the set of all types, and $K$ is the maximum number of sides
in the macrosides of $\sigma'$-macrotiles. 
One more index will be written in the middle of the tile,
we call it \emph{the sixth} index.
This index does not affect the way the tiles are connected, so we will remove it later. On all tiles in set $P'''$, the sixth index will indicate the type of the tile.
The $i$th index on side $a$ of tile $A$ will be denoted by $\ind_A(i,a)$.

The set of tiles $P''$, by definition, consists of all tiles in 
$P'$, each of whose sides is colored as a tuple of five elements of $U$,
and one more element of $U$ is written in the middle of the tile.

Now we define the substitution $\sigma''$ on decorated tiles.
Ignoring the colors, i.e., the indices, this will be the same substitution
$\sigma'$.

\subsubsection{Substitution $\sigma''$ on tiles of $P''$}\label{s263}
\newcommand{\cld}{\text{Ch}}

This substitution is non-deterministic, since some 
crown indices can be chosen in multiple ways.
That is, for a tile $A\in P''$ of the form $\alpha$,  $\sigma'' A$ consists
of several 
tilings $\mathcal M$ with $\pi\mathcal M=\sigma'\alpha$.

Let $A\in P''$ be a tile of form $\alpha$. Consider the macrotile $\sigma'\alpha$.
Recall that we call tiles from the macrotile $\sigma'\alpha$ types.
Let $\mathcal M$ be a tiling with  tiles from $P''$ such that $\pi\mathcal M=\sigma'\alpha$. By $\mathcal M_s$
we denote the tile from $\mathcal M$ obtained by choosing indices
for type $s$. 

Tilings $\mathcal M\in \sigma'' A$ are defined in such a way that
$\mathcal M_s$ stores $s$, the identity and crown indices of some sides of type $s$
(this information is independent of the indices of tile $A$), and also
some information about the indices of tile $A$. More precisely, a tiling $\mathcal M$ belongs to $\sigma'' A$ if the following conditions are met:
\begin{itemize}
\item
The sixth index of $\mathcal M_s$ is $s$. For this reason, we call a tile $A\in P''$
with sixth index $s$ a \emph{tile of type $s$}.

\item If $b$ is a special side of type $s$ and $b$ is the print of a side $a$ of the tile $A$,
then the color of side $b$ of tile $\mathcal M_s$
is
$$\pair{\ind_A(1,a),\ind_A(2,a), \ind_A(3,a),\ind_A(4,a),I_s(b)}.$$
Being a print is a property of the side $b$ of the macrotile $\sigma'\alpha$, and not of the pair
(side, type): every print is a side shared by the central type and an adjacent small quasi-square,
and it is special in both of them. This clause therefore applies to \emph{both} types sharing $b$,
and it prescribes to both of them the same color, since the identity index $I_s(b)$ of an inner
side is the same ordered pair for the two types sharing that side. This symmetry is essential:
were the borrowed 4-tuple required from the central type only, no macrotile in $\sigma''A$
would be proper.

\item Otherwise
\begin{enumerate}
\item The first index on side $b$ is equal to the
identity index of side $b$ of type $s$, that is, to $\ind_{\mathcal M_s}(1,b)=I_s(b)$,

\item The second index on side $b$ is equal to some
crown index of side
$b$ of type $s$, that is, $\ind_{\mathcal M_s}(2,b)\in C_s(b)$,
\item The third index on side $b$
is equal to the $i$th index on side $a$ of tile $A$, that is,
$\ind_{\mathcal M_s}(3,b)=\ind_A(i,a)$ if $b\in P_i(a)$ (recall that there is at most one such path) 
and $\ind_{\mathcal M_s}(3,b)=0$ otherwise. In particular, if  $b$ is the $i$th port on macroside $\sigma' a$,
then $\ind_{\mathcal M_s}(3,b)=\ind_A(i,a)$ (recall that  $i$th port on macroside $\sigma' a$ always belongs to $P_i(a)$).
\item
The fourth index on side $b$
is equal to $A_s(b)$ if
$b$ is an outer side of type $s$,
is equal to the sixth index of the tile
$A$ if $b$ is a boundary side of type $s$, and is zero otherwise.
For this reason, the fourth index of boundary
sides will be called \emph{the parent index}.
\item
The fifth index on side $b$
is zero. \end{enumerate}
Finally, there is one more constraint on the choice of the second index of outer and boundary sides:
\item \textit{Crown Consistency Property:} if sides $b,b'$ of a type $s$ share a vertex $V$
which lies on the boundary of $\sigma'\alpha$,
then the crowns for $V$ in $\ind_{\mathcal M_s}(2,b)$ and $\ind_{\mathcal M_s}(2,b')$ must match.

\end{itemize}
Note that not all macrotiles $\mathcal M\in\sigma'' A$ are proper tilings.
This is because the values of the second index 
on a common side of two different tiles may differ.

\begin{definition}
We call a tile $B\in P''$ a \emph{child of type $s$} of a tile $A\in P''$ if $B=\mathcal M_s$ for some macrotile
$\mathcal M\in \sigma''A$. In this case, we also call $A$ the \emph{parent} of $B$.
The set of all children of type $s$ of a tile $A$ will be denoted by $\cld_s(A)$.
\end{definition}

\begin{definition}[Local indices and normal tiles]
Let $s$ be a type.  
\emph{Local
indices of type $s$} are the following
\begin{itemize}
\item if $a$ is a non-marked side, then all its indices are local,
\item if $a$ is a special 
side, then its fifth index is  local,
\item otherwise the first and fifth indices are local.
\end{itemize}
The idea behind this definition is that local indices on a side $a$ of a tile $A$ are those that 
can be found from the type  of $A$ and from $a$. More exactly,
if a tile's local indices and its type  (that is, its sixth index) satisfy
the above requirements, then we call the tile \emph{normal}.
\end{definition}

\begin{remark}\label{rem-parent}
A tile may have several parents or no parents at all. In the former case, all its parents
are of the same form. If $B$ is a boundary tile, then, moreover, all its parents
are of the same type. All tiles that have a parent are normal.
\end{remark}

\subsection{The Set of Tiles $P'''$}\label{slt}

Now we will discard some tiles from $P''$, imposing some
restrictions on the decoration
of the prototiles. Tiles that satisfy
those restrictions will be called
\emph{legal}. They will form the set of tiles $P'''$.

For each specific initial substitution $\tau$, we can explicitly define the legal tiles,
but since we want our construction to be general, we will use
a different approach.
Obviously, every legal tile must have a parent.
Indeed, a proper tiling with legal tiles
must have a $\sigma''$-composition.
Therefore, we need to remove all  tiles that have no parents.
After that, we must remove
tiles all of whose parents have been removed. And so on. Each new removal may increase the number
of tiles all of whose parents have been removed. More or less, we will call a decorated tile legal if it has a parent, which in turn
has a parent, and so on, infinitely many times.

But unfortunately, this is not enough. For example, let $s$ be any central type, and $t$ 
the central type in the macrotile $\sigma' s$. Then the macrotiles
$\sigma' s$ and $\sigma' t$ coincide, since $s$ and $t$ are of the same form. Therefore, $t$ is a central type in the macrotile $\sigma' t$.
We will call such types \emph{cyclic}.
Every normal
tile of a cyclic type $t$ 
is its own parent.
Therefore, such a tile will never be removed. However, there are too many such
tiles, and they lead to parasitic tilings.
Therefore, we introduce another requirement.

\begin{definition}[Borrowed indices]
Assume that $B$ is a child of $A$. If the side $b$ of $B$ is special and is 
a print of some side, then 
the first four indices of $B$ on $b$ 
are borrowed from the parent tile $A$.
For this reason, these indices are called \emph{borrowed}.
If $b$ is not special but belongs to a path, then its 
third index is also called \emph{borrowed}. 
Finally, the fourth index
on boundary sides is also  \emph{borrowed}.
\end{definition}

\begin{definition}
We call a decorated tile $A\in P''$ \emph{legal} if
there exists an infinite sequence of decorated tiles
$A_0=A,A_1,A_2,\dots\in P''$ with the following properties:
\begin{itemize}
\item $A_{i+1}$ is a parent of $A_{i}$ for all $i$,
\item
Consider some borrowed index $I_i$
in $A_{i}$. It is a copy of an index $I_{i+1}$ from $A_{i+1}$, call $I_{i+1}$ the \emph{source} of $I_i$. This index may again be borrowed, in which case we consider its source $I_{i+2}$ in $A_{i+2}$, and so on.
If the sequence $I_{i}, I_{i+1}, I_{i+2}, \dots$ ends with an unborrowed index,
then we call the latter the \emph{origin} of all indices in the sequence.
Otherwise, we say that $I_i$ \emph{has no origin}.
It is required that for all $i$, all borrowed indices in $A_{i}$ without an
origin are 
zeros.
\end{itemize}
\end{definition}
It is useful to realize where borrowed indices without an origin come from. Parent indices always have an origin.
Therefore, a borrowed index without an origin on a side $a$ can occur only if $a$ belongs to some path. If side $a$ is not special, then this index must be the third one; otherwise it must be the 1st, 2nd, 3rd or 4th one.

When talking about legal tiles, we will freely use the expressions
``parent index'', ``crown index'', ``alignment index''.
Each time, it will be clear which index is meant.

\subsubsection{Properties of Legal Tiles}
To get used to legal tiles,
let us formulate some of their simple properties:

\renewcommand{\labelenumi}{S\theenumi:}
\begin{enumerate}
\item Every legal tile has a parent and hence is  normal --- its parent is 
the second term $A_1$ of the
ancestor sequence that confirms legality.
\label{s1}
\item Every child of every legal tile is also legal.\label{s3}
\item The parent indices on different sides of a legal tile
match. \label{s2}
\item
If a legal tile has two sides on the path $P_i(a)$
then the copies of  $\ind_A(i,a)$
on those sides 
match.\label{s6}
\item
If two sides $a,b$ of the same tile share a common vertex $V$, then the crowns for $V$
in the crown indices of these sides match.\label{s7}
\end{enumerate}

\subsection{Proof of the correctness of the construction}
Now we have to prove that  \emph{a tiling $\mathcal T$ with tiles of the original set $P$ is 
a $\tau$-substitution tiling if and only if $\mathcal T=\Delta\pi\mathcal T'$ for some proper tiling
$\mathcal T'$ with legal tiles.}

First, we prove the easy direction, that it is possible
to properly color any substitution tiling.

\begin{prop}\label{th1}
For any $\tau$-substitution  tiling  $\mathcal T$ with tiles from $P$ there is 
a proper tiling $\mathcal T'$ with legal tiles such that $\mathcal T=\Delta\pi\mathcal T'$. 
\end{prop}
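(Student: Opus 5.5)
We first build the composition hierarchy, then decorate every level top-down with the rules of $\sigma''$, and finally check legality and properness.

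\emph{Step 1: the hierarchy.} By Lemma~\ref{th11}, applied repeatedly (each composition is again a substitution tiling), $\mathcal T$ has an infinite chain of $\tau$-compositions. Keeping every $m$-th term gives $\tau^m$-compositions $\mathcal T=\mathcal T_0,\mathcal T_1,\mathcal T_2,\dots$, all of which are $\tau$-substitution tilings. Since $\sigma'$ is compatible with $\tau^m$, we regroup the tiles of each level into $P'$-tiles so that $\mathcal T_{i}$ becomes $\Delta$ of a $\sigma'$-decomposition of a $P'$-tiling at level $i+1$. We do this top-down by induction: a $P'$-tile at level $i+1$ determines which tiles of its $\tau^m$-macrotile are merged into quasi-squares. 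Levels are indexed upward, so we need some starting point. We take, by compactness, a limit of finite ``towers'' (as in the diagonal argument of Lemma~\ref{th11}); equivalently, we choose for every $n$ a consistent regrouping of the finitely many relevant levels and pass to a convergent subsequence. This yields $P'$-tilings $\mathcal T'_0,\mathcal T'_1,\dots$ with $\mathcal T'_i=\sigma'\mathcal T'_{i+1}$ and $\Delta\mathcal T'_0=\mathcal T$.

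\emph{Step 2: colors.} Each tile of $\mathcal T'_i$ gets a type, namely its position in its $\sigma'$-parent. Identity, alignment and parent indices (the sixth index of the parent) are then forced. Borrowed indices are propagated downward along the paths and prints. To avoid the problem of indices without an origin, we define every index by following its source chain upward. If the chain reaches an unborrowed index, we copy that value; if it never does, we set the index to $0$. This rule satisfies both the legality requirement and the substitution rules, because each index's value is determined by its parent copy.

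The second (crown) index is the key point. Since $\mathcal T_i$ is a $\tau$-substitution tiling, its crown at every vertex is $\tau$-allowed: a crown is a finite fragment, so it lies in a supertile. We choose as crown index the actual crown of $\Delta\mathcal T'_i$ at the relevant vertex. This choice lies in $C_s(b)$, satisfies Crown Consistency, and agrees on both sides of any shared side. The fourth and fifth indices are zero or forced. Hence every tile of level $i$ is a member of some $\mathcal M\in\sigma''A$, where $A$ is its colored parent at level $i+1$. The upward ancestor chain then witnesses legality.

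\emph{Step 3: properness.} Any two adjacent tiles of $\mathcal T'_0$ meet side-to-side (STS), and their colors agree index by index. Inside a common macrotile this holds by construction, including prints, as the symmetric clause guarantees. For tiles in different macrotiles, the shared side lies on adjacent macrosides. Alignment indices match because the macrotiles meet side-to-side, using STS at the higher level. Identity indices are both $0$. Crown indices match because both equal the true crown of $\mathcal T$. Ports carry the same value on both sides; here we need that the higher-level colors themselves match across the parent sides, which follows by induction on the level, again via the ports.

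\emph{Main obstacle.} The hardest part is this last inductive consistency argument together with the compactness in Step 1. Properness at level $i$ depends on properness at level $i+1$, which leads to an infinite regress. We resolve it by defining all colors globally through the hierarchy, so that agreement at each shared side reduces to the fact that the two sides carry the same data from the same higher-level sides. For sides with no common ancestor side (two tiles separated at every level, as in the boundary of an infinite-order supertile), borrowed indices without an origin are $0$ on both sides and so agree trivially.
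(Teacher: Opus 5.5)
Your proof follows the paper's route: the composition chain from Lemma~\ref{th11} thinned to step $m$, regrouping into $P'$-tiles, decoration by the rules of $\sigma''$ with actual crowns and with borrowed indices set to their origin or to $0$, legality from the ancestor chain, and properness by tracing borrowed indices upward; in addition, your compactness argument in Step~1 makes explicit something the paper leaves implicit, namely that the quasi-square grouping at one level depends on the grouping one level up (in $\sigma'$ of a quasi-square only one constituent's $\tau^m$-macrotile contains quasi-squares), so there is no top level from which to build it. In Step~3 the regress should be closed, as in the paper, by induction on the finite height of the origin rather than on the level: across a shared side, the two source chains consist at each step of one common index or of the same-numbered index on the two faces of one geometric side, so either both reach non-borrowed indices at the same height (and these agree by construction) or neither has an origin and both are $0$; that is the right case split, not whether the two tiles ever acquire a common ancestor (e.g.\ the third index on the second or fourth port of $\sigma'd$ with $d$ an outer side has an origin one level up even if the tiles are separated at every level).
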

\begin{proof}
  Fix a $\tau$-substitution  tiling $\mathcal T$ and an infinite sequence $\mathcal T_0=\mathcal T, \mathcal T_1,\mathcal T_2,\dots$ of substitution tilings  in which each tiling is a $\tau$-composition of the previous one. 
Such a sequence exists by Lemma~\ref{th11}.
  Consider its subsequence $\mathcal T_0, \mathcal T_m,\mathcal T_{2m},\dots$.
In this subsequence each tiling is a $\tau^m$-decomposition of the next one.

In each tiling  $\mathcal T_{im}$ group the tiles into $\tau^m$-macrotiles according
to $\mathcal T_{(i+1)m}$. 
Then group some tiles to obtain large and small quasi-squares as described above. 
In this way we get a tiling  
$\mathcal T'_{im}$ with tiles from $P'$. 
Moreover, by construction we have $\mathcal T'_{im}=\sigma' \mathcal T'_{(i+1)m}$.

Now we have to decorate $\mathcal T'_{im}$. Since $\mathcal T_{(i+1)m}$ is side-to-side,
  in $\mathcal T_{im}$ every $\sigma'$-macroside is adjacent to a $\sigma'$-macroside. Hence 
each port is adjacent to a port with the same number in $\mathcal T'_{im}$.
  
  We want to properly color all tilings of our sequence so that for the resulting sequence
  $\mathcal T''_0, \mathcal T''_{m},\mathcal T''_{2m},\dots$, each tiling is a $\sigma''$-composition of the previous one. 
  To do this, we color each macrotile $\mathcal {M}$ from
the tiling $\mathcal T''_{im}$ as described in Section~\ref{s263}. 
We choose the components of 
crown indices as actual crowns in $\pi\Delta\mathcal T_{im}$. The construction ensures that
all non-borrowed indices on shared sides match and all crowns in crown indices are $\tau$-allowed. 

  The borrowed indices of the tiles from $\mathcal {M}$ are yet undefined since we have not yet completely decorated the parent $D$ of $\mathcal {M}$, which belongs to $\mathcal T_{(i+1)m}$.
  We set the borrowed indices that have an origin to their origin, and set the borrowed indices without an origin to zero.
  
  This decoration has the following properties.
 
  (a) \emph{All tilings of the chain $\mathcal T''_0, \mathcal T''_m,\mathcal T''_{2m},\dots$ are proper.}
  
Indeed, non-borrowed indices on shared sides match by construction. Let us prove the same thing for borrowed indices.
Consider a pair of  borrowed indices $\ind_B(k,b)$ and  $\ind_C(k,b)$
such that $b$ is a shared side of tiles $B,C\in  \mathcal T''_{im}$.
If $b$ is an inner side, then by construction $\ind_B(k,b)$ and  $\ind_C(k,b)$ copy the same index
of the parent tile and hence match.
Otherwise $b$ is an outer side of a macrotile from $\mathcal T''_{im}$.
Then $B,C$ belong to different macrotiles, thus they copy different indices.
However, if  $\ind_B(k,b)$ has an origin $\ind_D(l,d)$ in a tile $D\in \mathcal T''_{(i+j)m}$,
then $\ind_C(k,b)$ also has an origin and that origin is either the same 
index $\ind_D(l,d)$, or it is 
$\ind_E(l,d)$ where
$E$ is the tile from $\mathcal T''_{(i+j)m}$ that shares the side $d$ with $D$.
This is proven by induction on $j$.

In the first case $\ind_B(k,b)=\ind_D(l,d)=\ind_C(k,b)$. In the second case
by construction $\ind_D(l,d)=\ind_E(l,d)$ and hence $\ind_B(k,b)=\ind_C(k,b)$.
The same argument applies when  $\ind_C(k,b)$ has an origin. Otherwise 
both  indices are borrowed and have no origin. Hence they  are equal to zero
and thus match.
 
  (b) \emph{All the resulting decorated tiles are legal.} Indeed, it follows from the construction that after decoration,
  each macrotile is a $\sigma''$-decomposition of its parent.\footnote{The Crown Consistency Property
  is met, since crown indices include actual crowns.} Consider an arbitrary decorated tile $A$ from any of the tilings of the chain
  $\mathcal T'_0, \mathcal T'_{m},\mathcal T'_{2m},\dots$.
  Consider its ancestors
  $$
  \dots\to A_{3m}\to A_{2m}\to A_m\to A_0=A
  $$
  By construction, all indices without an origin in tilings from this sequence are zero. Therefore, the tile $A$ is legal.
\end{proof}

\begin{remark}
To understand what follows, it is helpful to picture the coloring of the paths
constructed in the proof of this proposition. For any inner side $a_{im}$ in the tiling $\mathcal T''_{im}$,
its $j$th index (for $j\le4$) turns, in the tiling $\mathcal T''_{(i-1)m}$,
into two paths $P_j(a_{im})$ lying in two neighbouring macrotiles. These two paths connect
the prints of $a_{im}$ in the central tiles of these macrotiles to the $j$th port on the macroside $\sigma' a_{im}$.
On all sides of these paths, except the prints of $a_{im}$, the third index equals the $j$th index of the side $a_{im}$.
It is convenient to think of them as a single path connecting the two special sides
(the prints of $a$) of the centers of the neighbouring macrotiles.

Then, in the tilings $\mathcal T''_{(i-2)m},\dots,\mathcal T''_{0}$, this combined path turns
into several consecutive paths $P_3$, the third index on the sides of which equals the $j$th index of the side $a_{im}$.
Again it is convenient to picture this as a single path connecting two special sides of tiles
that can be very far from each other.
Thus this index is stored on the sides of the combined path in each of the tilings
$\mathcal T''_{(i-1)m},\dots,\mathcal T''_{0}$.

But in the tiling $\mathcal T''$ there may also be consecutive paths of the same kind that store
no information in their third index --- on them the third index equals $0$. This happens when
the third index on the side $a$ has no origin. This occurs when the side $a$ belongs to the path $P_3(a_1)$
connecting the third port on the macroside $\sigma'(a_1)$ to the print of $a_1$, while the side $a_1$
belongs to the path $P_3(a_2)$ connecting the third port on the macroside $\sigma'(a_2)$ to the print of $a_2$,
and so on, infinitely many times. 
\end{remark}

Let us now prove that any proper tiling with legal tiles has a $\sigma''$-composition
that is proper and consists of legal tiles only.

\begin{prop} \label{th2}
  Every proper tiling $\mathcal T$ with legal tiles has a 
proper $\sigma''$-composition $\mathcal T'$ consisting of legal tiles.
 \end{prop}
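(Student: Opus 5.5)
The plan has three stages. First, group the tiles of $\mathcal T$ into $\sigma'$-macrotiles using identity indices. Second, reconstruct the parent tiles together with their decorations. Third, check that the resulting composition is proper and that its tiles are legal.

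\emph{Grouping.} Every legal tile has a parent (S1), so it is normal and carries its type $s$ as its sixth index. The identity index on an inner side $b$ of a tile $B$ of type $s$ is the ordered pair $\pair{s,r}$. Properness forces the neighbour of $B$ across $b$ to carry the same pair, so that neighbour has type $r$ and sits exactly where $r$ sits relative to $s$ in the macrotile $\sigma'\alpha$. Starting from a tile of type $s$ and following inner sides, we recover in $\mathcal T$ a full copy of the pattern $\sigma'\alpha$ containing it, because each $\sigma'$-macrotile is connected through inner sides. Call these copies \emph{blocks}. Distinct blocks are disjoint, and $\mathcal T$ is partitioned into blocks.

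Alignment indices on outer sides must match across a block boundary. This forces blocks to meet macroside-to-macroside with matching side numbers, so in particular ports meet ports with the same number.

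\emph{Reconstructing the parent.} For each block of form $\sigma'\alpha$ we define a parent tile $A$ of form $\alpha$ as follows.
\begin{itemize}
\item The sixth index of $A$ is the common parent index of the boundary sides of the block. These indices agree within a single tile by S3, and agree across tiles because adjacent tiles share boundary sides and the block's inner sides connect them.
\item For a marked side $a$ and $i\le4$, the index $\ind_A(i,a)$ is read off the print of $a$. By S4 and properness along the path $P_i(a)$, this value equals the third index on the $i$th port.
\item For a non-marked side, or for $i=5$, the index is read directly from the $i$th port.
\end{itemize}
We must then show that the block, with these indices, is an element of $\sigma''A$. The required conditions are satisfied as follows.
\begin{itemize}
\item The identity, alignment and fifth indices are local, hence correct by normality.
\item Crown consistency holds by S5.
\item The indices borrowed along paths agree with $A$ by the argument above.
\item The symmetry remark on prints ensures that the central tile and the adjacent small quasi-square carry the same borrowed 4-tuple.
\end{itemize}

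\emph{Properness of $\mathcal T'$.} Take adjacent parents $A,A'$ sharing a side $a$, that is, blocks sharing the macroside $\sigma'a$. Every index of $A$ on $a$ is read from a port on that macroside, and the port of the neighbouring block with the same number is the same side of $\mathcal T$. Properness of $\mathcal T$ then gives $\ind_A(i,a)=\ind_{A'}(i,a)$ for all $i\le5$.

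One issue must be handled here. The parent tiles of $\mathcal T'$ must themselves meet side-to-side, and a side of $A$ must not be a union of several sides of $A'$. The fact that one side of $A$ corresponds to a macroside carrying five ports, with alignment numbering $1..K$, forces the macrosides to coincide exactly.

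\emph{Legality (main obstacle).} Each tile of $\mathcal T'$ needs an infinite ancestor chain in which origin-less borrowed indices are zero. A tile $B$ of $\mathcal T$ has a legal ancestor sequence $B,B_1,B_2,\dots$, but $B_1$ need not be the reconstructed $A$, since a tile may have several parents. My approach is to show that $A$ differs from $B_1$ only in indices, and to transplant the tail of the chain.

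If $B$ is a central tile and $B_1$ is large, the print of every marked side of $A$ lies on $B$. In that case $B$ determines the first four indices of $A$ on all marked sides, and the parent index determines the type. The remaining indices of $A$ are local or fifth indices, so $A=B_1$ and $A$ is legal.

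In general, I would pick in each block the tile that is central, or that borders the central tile, to pin down the ancestor. This is where the count $4N>3Q$ enters: it guarantees enough special sides to carry every index of $A$. The delicate part is that some indices of $A$, for instance indices on non-marked sides read from ports, are not copied into the central tile. I would argue that these indices are local in $A$, so they are determined by $A$'s type and therefore agree with $B_1$. This is the step I expect to require the most care.
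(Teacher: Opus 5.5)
Your grouping, the reconstruction of the parent tile, and the properness and side-to-side checks follow the paper. The reconstruction is essentially part~(b) of the Composition Lemma~\ref{l9}; reading the first four indices off the prints rather than the ports is harmless, since the path argument shows the two agree. The gap is in the legality step, which is the heart of the proposition.

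You claim that for the central tile $B$ ``the parent index determines the type'', and you conclude $A=B_1$. But the central tile has no boundary side and carries no parent index. The parent index on the boundary of the block gives the type $t$ of your reconstructed $A$, not the type $t'$ of the legal parent $B_1$ of $B$. All you know about $B_1$ is that, being another parent of $B$, it has the same form as $A$ and the same first four indices on marked sides. If $t'\ne t$, then $A$ and $B_1$ differ in the sixth index and in the local indices, and the tail $B_2,B_3,\dots$ cannot be transplanted. A boundary tile has the opposite defect: its legal parent has type $t$, but it copies only a few of that parent's nonlocal indices. So the step you single out as delicate (non-marked sides) is trivial once the types agree. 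The real issue is proving $t'=t$, and your proposal contains no argument for it.

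The missing idea is that the type can be read off the first indices. On a marked inner non-special side of a non-central type, the first index is the identity index, which contains the type itself, and these first indices are exactly what the prints copy onto the central tile. The paper packages this as the multiset $\Phi$ and Lemma~\ref{l8}: normal non-central tiles of different types have different $\Phi$. This gives $t'=t$, hence $D=A_1$, when $t$ is non-central.

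For central types this fails, because on special sides the first index is not local. The paper therefore treats the cyclic case separately (there $D$ coincides with its central child). In the central acyclic case it composes $\mathcal T'$ once more to obtain a parent $C$ of $D$ of non-central type. Only then does one know that at least $4N-Q$ special sides of $D$ carry identity indices containing $t$, while a central type $t'\ne t$ could show $t$ in at most $2Q$ entries of $\Phi$.

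That is where $4N>3Q$ is used; fitting the prints needs only $4N\ge Q$, contrary to your attribution. It also means the paper does not obtain legality of central acyclic tiles block by block, as your plan envisages, but only after the next level of composition.
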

\begin{proof}
Recall that a central type $t$ is called \emph{cyclic} if $t$ is the central type in the macrotile $\sigma' t$, see Fig.~\ref{f34}.
\begin{figure}
\begin{center}
\includegraphics[scale=1]{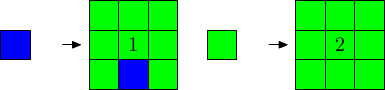}\hskip 2.5cm \includegraphics[scale= 1]{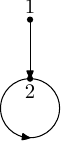}
\end{center}
\caption{The central type 1 is acyclic, and the central type 2 is cyclic.}\label{f34}
\end{figure}
The key lemma in the proof is the following 
\begin{lemma}[Composition Lemma]\label{l9}
Let $\mathcal M$ be a finite proper tiling 
that is equal to a $\sigma'$-macrotile 
$\sigma' \alpha$  provided we ignore all indices except the sixth one (the one written in the
middle of a tile, which specifies its type).
Assume that all tiles in $\mathcal M$ are legal, except possibly for the  central tile $A$. 
    Then the following hold: 
    \begin{enumerate}
    \item[(a)] All boundary sides of  $\mathcal M$ have the same parent index $t$, which is of the form $\alpha$.
  \item[(b)] There is a normal tile $D$ of type $t$ with $\sigma''D\ni\mathcal M$.
      
    \item[(c)] If the central tile $A$ is legal and  $t$ is a non-central or cyclic type, then $D$ is legal as well.
    \end{enumerate}
  \end{lemma}
  \begin{proof}
    (a) Due to property~S\ref{s2} and since $\mathcal M$ is proper, all the boundary sides of  $\mathcal M$ have the same parent index $t$. To prove that $t$ is of the form $\alpha$, 
it suffices to find a legal type-$t$ tile of the form $\alpha$.
   Such a tile is any legal parent $C$ of any border tile $B$ in $\mathcal M$,
   see Fig.~\ref{f8}.
    \begin{figure}
      \begin{center}
        \includegraphics[scale=.9]{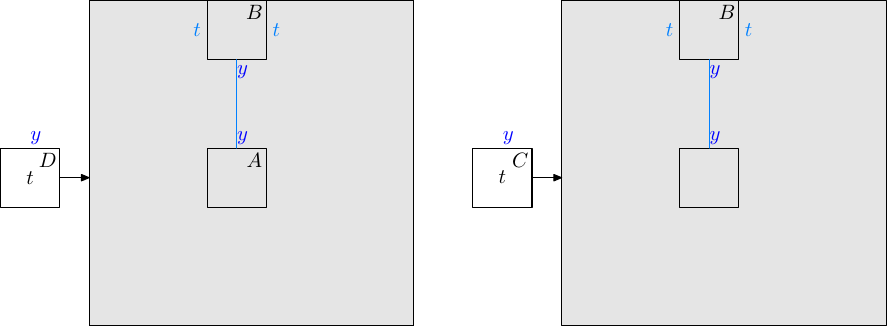}
      \end{center}
      \caption{On the left is a proper tiling $\mathcal M\in \sigma'' D$ with $\pi(\mathcal M)=\sigma'\alpha$ 
and with the  central tile $A$. 
        On the right is the legal parent $C$ of $B$ and a decomposition of $C$. 
        The type of $C$
        and the parent indices in its decomposition and in the tiling $\mathcal M$ are  denoted
        by $t$.
}\label{f8}
    \end{figure}
   The form of $C$ is $\alpha$, since $C$ has a child in $\mathcal M$ and $\pi(\mathcal M)=\sigma'\alpha$. And the type of $C$ is $t$, since so is
 the parent index of $B$.

(b) Let $D$ be the following tile, whose indices are determined
by $t$ and third  indices on ports of  $\mathcal M$:
\begin{itemize}
\item the sixth index of $D$ is $t$,
\item the $i$th index of the side $d$ of $D$ is equal to the third index on the $i$th 
port on the macroside $\sigma'd$ of $\mathcal M$.
\end{itemize}

Let us show that this tile $D$ is normal. Let $\ind_D(i,d)$ be a local
index of $D$. 
By construction, 
$$
\ind_D(i,d)=\ind_B(3,b)
$$
where $b$ is the $i$th port
on the macroside $\sigma'd$ of the macrotile $\mathcal M$ and 
$B$ the tile from $\mathcal M$ containing this port (see Fig.~\ref{f8}). 
By assumption,
tile $B$ is legal, and therefore has some legal parent $C$.
The type of $C$ is $t$, since that is the parent index of $B$. Indeed, the tile $B$ contains a
port and hence has an outer side; not all sides of $B$ are outer, so $B$ has a boundary side as
well. By item (a) the parent index on that side equals $t$, and by the requirement on the fourth
index of boundary sides it equals the sixth index of $C$, that is, the type of $C$.
Furthermore,
$$
\ind_B(3,b)=\ind_C(i,d)
$$
by the definition of the substitution $\sigma''$.
Therefore, $\ind_D(i,d)=\ind_C(i,d)$,
and therefore $\ind_D(i,d)$ takes the value allowed for normal tiles of type $t$.
Note that in this argument it matters only that the value of a local index for a normal 
tile is a function of $t$ and $d$.

Let us show
that $\mathcal M$ is a $\sigma''$-decomposition of $D$.
First, note that $\mathcal M$ satisfies the Crown Consistency Property,
since all the tiles that have vertices on the border of $\mathcal M$ are  legal and 
thus satisfy Property~S\ref{s7}. Therefore,
it suffices to prove that
for all types $s$ in $\sigma'\alpha$, the tile $\mathcal M_s$
has indices satisfying the requirements for indices of tiles
$\mathcal M_s$ for $\mathcal M\in\sigma''D$.

First, assume that tiles of type $s$ do not have borrowed indices; in particular, type $s$ is neither central nor boundary.
Then, each tile of the form $\alpha$
has only one child of type $s$, and for different tiles of the form $\alpha$,
these children coincide.
Since the tile $\mathcal M_s$ is legal, it has a parent and that parent must be of the form $\alpha$.
Therefore, $\mathcal M_s$ is a child of $D$.

Now suppose that type $s$ has borrowed indices but is not central. Let $\ind_{\mathcal M_s}(j,b)$ be any index of the tile $\mathcal M_s$.
We need to prove that this index satisfies the requirements for indices
of tiles from macrotiles in $\sigma'' D$.
By assumption, the tile $\mathcal M_s$ is legal, and therefore
has a legal parent $C$. Therefore, $\ind_{\mathcal M_s}(j,b)$
satisfies the requirements for macrotiles in $\sigma'' C$. If
$\ind_{\mathcal M_s}(j,b)$ is not borrowed, then these requirements for
$\sigma'' C$ and $\sigma'' D$ are the same, so $\ind_{\mathcal M_s}(j,b)$
also satisfies the requirements for macrotiles from $\sigma'' D$.

If
$\ind_{\mathcal M_s}(j,b)$ is borrowed, then the argument is a little more complicated.
Recall that there are three sorts of borrowed indices: the parent index on a boundary side, the
third index on a non-special side belonging to a path, and the first four indices on a special
side that is a print. We consider these three cases in turn.

\emph{The parent index} ($j=4$ and $b$ is a boundary side). Then $\ind_{\mathcal M_s}(j,b)$
must be equal to the type of tile $D$. This is indeed the case
by the construction of tile $D$.

\emph{The third index on a path.} Assume that $b$ is not special and $j=3$, so that
$b\in P_i(d)$ for some $i\le5$ and some side $d$ of tile $D$.
In this case, we need to prove that $\ind_{\mathcal M_s}(3,b)=\ind_D(i,d)$.
Indeed, all sides of the path $P_i(d)$ in the macrotile $\mathcal M$, except the print at which
the path ends, have the same third index. For two sides of the path that belong to the same tile
this follows from Property~S\ref{s6}: both their third indices are copies of one and the same
index of a legal parent of that tile. For two sides of the path that are shared by neighbouring
tiles this follows from the properness of $\mathcal M$.
In particular, $\ind_{\mathcal M_s}(3,b)$ is  equal to the third index of the $i$th port of the macrotile $\mathcal M$
on the macroside
$\sigma' d$. By the construction of the tile $D$, the latter is equal to $\ind_D(i,d)$.

\emph{The borrowed indices on a print.} Assume finally that $b$ is special and is the print of a
marked side $d$ of the form $\alpha$; then $j\le 4$, and what we have to prove is that the color
of $b$ in $\mathcal M_s$ equals
$\pair{\ind_D(1,d),\ind_D(2,d),\ind_D(3,d),\ind_D(4,d),I_s(b)}$, as $\sigma''D$ requires. Since
the type $s$ is not central, $s$ is the small quasi-square that shares the print $b$ with the
central type. The tile $\mathcal M_s$ is legal, hence it has a legal parent $C$, and the form of
$C$ is $\alpha$, because $C$ has a child of type $s$. Consequently $C$ and $D$ have the same
prints and the same paths, and the requirements of $\sigma''C$ tell us that the color of $b$ in
$\mathcal M_s$ is $\pair{\ind_C(1,d),\dots,\ind_C(4,d),I_s(b)}$. So it suffices to show that
$\ind_C(j,d)=\ind_D(j,d)$ for every $j\le4$.

Consider the side $c$ of the path $P_j(d)$ that precedes the print $b$. The path has at least
three sides, since no tile of the macrotile has both an outer side and a side of the central tile.
By the choice of the paths, $c$ is a non-special side of the small quasi-square $s$ itself, so the
requirements of $\sigma''C$ give $\ind_{\mathcal M_s}(3,c)=\ind_C(j,d)$. On the other hand, by the
previous case the third index is the same on all sides of $P_j(d)$ except the print, so
$\ind_{\mathcal M_s}(3,c)$ equals the third index of the $j$th port of $\mathcal M$ on the
macroside $\sigma'd$, which is $\ind_D(j,d)$ by the construction of $D$. Hence
$\ind_C(j,d)=\ind_D(j,d)$, which is what we needed.

   It remains to show that $A$, the central tile in $\mathcal M$, is in  $\cld_s(D)$, where
   $s$ is the central type in $\mathcal M$. 
     We have to prove that on each side $a$ of $A$ all
    indices are as required by the definition of $\cld_s(D)$. Choose any such side $a$.
    On that side, $A$ has the same indices as its neighbour $B$ on side $a$,
    since the tiling $\mathcal{M}$ is proper.
    And $B$  has the indices required for the child of $D$ of its type.
    By definition of $\sigma''$,  the indices of adjacent children on shared interior sides coincide.
    Therefore, the tile $A$ has the required indices on the side $a$. Note that here we use the
    symmetry of the requirement on prints: if $a$ is a print, then $\sigma''D$ prescribes on $a$
    the same borrowed tuple to the central type and to the small quasi-square sharing $a$ with it.

    (c) 
First, consider the case where $t$ is a cyclic type. We
claim that in this case $D$ and $A$ coincide, hence $D$ is legal.
Indeed, by construction, $D$ is of type $t$, thus the central child of $D$,
that is, the tile $A$, is also of type $t$. Both tiles $D,A$ are normal, therefore 
they have the same local indices. Finally, since $A$ is 
the central child of $D$,
its nonlocal indices copy those of $D$.

We now turn to the case where $t$ is a non-central type.

Consider any chain of tiles
$$
A_0=A,A_1,A_2,\dots\in P'',
$$
that proves the legality of $A$.
We claim that at least one tile in this sequence has a non-central type.
Indeed, otherwise, the first index on all marked sides of $A$ would be zero.
But we know this is not so, since $A$ inherits its first index from $D$,
which is not zero on at least one side; recall that $D$ is a normal non-central tile
and hence has an inner marked non-special side (prints are defined for marked sides only, and both
kinds of non-central types --- original tiles and small quasi-squares --- do have such a side).

Now consider the shortest chain
$$
A_0=A,A_1,A_2,\dots,A_n\in P'',
$$
in which each tile is a central child of the next one, and the last tile $A_n$
is of a non-central type and is legal. 
The central child of a central tile has the same form as the tile itself, so their central children
have the same type. Therefore, the types of all the tiles $A_0,\dots,A_{n-2}$ coincide; denote this common type by $s_0$.
Since $A_0$ is the central child of $A_1$ and has type $s_0$ too, the central child of a type-$s_0$ tile again has type $s_0$,
that is, $s_0$ is cyclic. By the cyclic case treated above, a normal tile of a cyclic type coincides with its central child;
hence the tiles $A_0,\dots,A_{n-2}$ all coincide.
Hence we can remove from the chain all the tiles $A_1,\dots,A_{n-2}$, which implies that
$n\in\{1,2\}$.

Let us show that in fact $n=1$. Assume that $n=2$. The tiles $A_1$ and $D$ are both parents
of $A$, hence by Remark~\ref{rem-parent} they have the same form, namely the form $\alpha$ of the
type $t$. On the other hand, if $n=2$, then $A_1$ is the central child of $A_2$, and the central
tile of a macrotile is always a large quasi-square, so the form of $A_1$ is a large quasi-square.
But the form of the non-central type $t$ is a small quasi-square or an original prototile, and
never a large quasi-square --- a contradiction. Thus $n=1$; let $s$ denote the type of the legal
tile $A_1$.

We claim that  $s=t$. To prove the claim, 
denote by $\Phi(A)$ the multiset consisting of all types $r$ such that 
there is a type $u$ such that  $\pair{r,u}$ or $\pair{u,r}$ is a first index of a \emph{marked} side of $A$.
The cardinality $|\Phi(A)|$ is 2 times the number $\marked(A)$ of marked inner sides of $A$ provided
$A$ has no special sides. Indeed, the first index on every non-special marked inner side is the identity
index, which contributes 2 to $|\Phi(A)|$. If $A$ has a special side (which can happen only if $A$ is a small quasi-square),
then $|\Phi(A)|$ may be $2\marked(A)-2$, since the first index on the special side of $A$ can be arbitrary.

We have  $\Phi(A_1)=\Phi(D)$; in fact the first four indices of $A_1$ and of $D$ coincide side by
side. Indeed, $A_1$ and $D$ are parents of one and the same tile $A$, so by
Remark~\ref{rem-parent} they are of the same form $\alpha$, and therefore one and the same print
map $a\mapsto b(a)$ is used for both of them (prints are chosen for a prototile, so they depend
only on the form). All marked sides of the central child $A$ are special, and $b(a)$ is a print, so
the definition of $\sigma''$ gives, for every marked side $a$ of $\alpha$ and every $j\le4$,
$$
\ind_{A_1}(j,a)=\ind_A(j,b(a))=\ind_D(j,a).
$$
In particular, the multisets $\Phi(A_1)$ and $\Phi(D)$ coincide.

Let us stress that the last argument compares $A_1$ and $D$, and \emph{not} $A$ and $D$:
the equality $\Phi(A)=\Phi(D)$ is in general false. The point is that all $4N$ marked sides of the
central child $A$ are special, while the number of prints equals the number of marked sides of
$\alpha$, which is smaller than $4N$ when $\alpha$ is not a large quasi-square; on the special
sides of $A$ that are not prints the first index is the identity index, and it contributes to
$\Phi(A)$ pairs that have nothing to do with $D$.

Thus it suffices to prove the following

\begin{lemma}\label{l8}
Assume that $A,B$ are normal non-central tiles of different types $s,t$. Then $\Phi(A)\ne \Phi(B)$. 
\end{lemma}
\begin{proof}
By way of contradiction assume that $\Phi(A)= \Phi(B)$. 

Without loss of generality we may assume that the number of marked sides of each small quasi-square 
is much larger than the number of sides of any of the original tiles.
Indeed,  let $\Gamma$
denote the largest side length of the prototiles from $P$.
Then the number of marked sides of any small quasi-square is at least roughly $3R/\Gamma$; recall that $R$ denotes
the side length of the grid square. 
Here the factor 3 comes from the fact that 
the side of the small quasi-square that is adjacent to the large quasi-square has only a few marked sides.

Recall that we may choose $R$ arbitrarily large, since
the power $\tau^k$ of the original substitution can be chosen arbitrarily
large. Indeed, it is enough that a grid of size $(N+2)\times (N+2)$ and $4\cdot4\cdot N$ disjoint paths fit inside the
$\tau^k$-macrotiles, where $N$ depends only on the original set and on $R$. Thus
we can first choose $R$, then choose $N$, and only afterwards choose a
sufficiently large $k$.

Hence $\Phi(A)=\Phi(B)$ implies that both $A,B$ are either original tiles having the same number of inner sides,
or small quasi-squares, whose number of marked sides differs by at most 1.

Note also that in both cases the types $s$ and $t$ must lie in one and the same macrotile, since
distinct macrotiles are disjoint. Indeed, if $A$ is an original tile, then it has no special sides,
so every member of $\Phi(A)$ is a component of an identity index of a side of $s$ and hence is a
type of the macrotile containing $s$; the same holds for $B$ and $t$, and $\Phi(A)=\Phi(B)$ is
nonempty. If $A$ and $B$ are small quasi-squares, then each of them has exactly one special side,
so all members of $\Phi(A)$ except at most two are types of the macrotile of $s$, and likewise for
$B$; as the number of marked sides of a small quasi-square is large, this again forces the two
macrotiles to coincide. So in what follows $s$ and $t$ are types of one macrotile $\sigma'\alpha$.

Assume first that $A,B$ are original tiles with $l$ inner sides,
so that $|\Phi(A)|=|\Phi(B)|=2l$. By the definition of identity index,
 $\Phi(A)$ includes $s$ with multiplicity at least $l$, while $\Phi(B)$ includes $t$ with multiplicity at least $l$.
Thus $s\ne t$ implies that all identity indices on inner sides of $A,B$ are $\pair{s,t}$ or $\pair{t,s}$.
In other words, types $s,t$ share all inner sides, which is impossible provided $\tau^k$-macrotiles
are large enough. Indeed, in that case every side on the boundary of the region $s\cup t$ is an
outer side of the macrotile, so $s\cup t$ is the entire macrotile, that is, the macrotile consists
of two tiles only.

Now assume that $A,B$ are small quasi-squares. 
WLOG assume that $A$ is to the north of the central tile. Then $A$ has at least roughly $R/\Gamma$
northern sides. All of them except a constant number are shared with original tiles $C$
such that $B$ does not share any side with $C$. Each such side contributes to $\Phi(A)$ the type of
the corresponding tile $C$, and that type can occur in $\Phi(B)$ only among the at most two members
of $\Phi(B)$ that come from the special side of $B$, whose first index may be arbitrary. Thus
$\Phi(A)$ has approximately $R/\Gamma$
members that are outside $\Phi(B)$. As we have seen, we can make $R$ arbitrarily large, which implies a contradiction.
\end{proof}

Let us continue the proof of item (c). 
We showed that  tile $A$ has a legal parent $A_1$ of type $t$
and a normal parent $D$ also of type $t$.
We claim that $A_1=D$ and hence $D$ is legal.
As we have shown above, the tiles $A_1$ and $D$
have identical first four indices on all marked sides.
Their fifth indices on marked sides coincide as well, since $A_1$ and $D$ are normal tiles of the same type.
On non-marked sides all their indices are local and hence coincide for the same reason.
Therefore $A_1=D$, and so $D$ is legal. Lemma~\ref{l9} (Composition lemma) is proved.
  \end{proof}

Let us continue the proof of Proposition~\ref{th2}.
Let $\mathcal T$ be a proper tiling with legal tiles. 
Identity indices guarantee that it can be partitioned into $\sigma'$-macrotiles. For each of these macrotiles $\mathcal{M}$ there is a prototile $\alpha$ with $\pi(\mathcal{M})=\sigma'\alpha$.
Replace each macrotile $\mathcal M$ of the original tiling by the tile $D$ existing by 
the Composition Lemma.
  We obtain a $\sigma''$-composition $\mathcal T'$ of $\mathcal T$.
 Alignment indices on outer sides ensure that the composed tiling is  side-to-side.
Since $\mathcal T$ is proper, all indices of its tiles on adjacent ports match.
And since all indices on sides of composed tiles are borrowed from ports of $\mathcal T$,
the composed tiling $\mathcal T'$ is proper. 

  It remains to prove that each tile $D\in\mathcal T'$  is legal.
  By 
the Composition Lemma, $D$ is legal unless $D$ is of central
  acyclic type.
  We have to prove that  $D$ is legal even in that case.
  
Since all composed tiles are normal, $\mathcal T'$ can be again split into macrotiles.
Consider a macrotile $\mathcal M'\subset\mathcal T'$  containing a tile $D$ of a central acyclic  type $t$.
  Denote by $r$ the parent indices of tiles in $\mathcal M'$. By the Composition Lemma, the tiling $\mathcal M'$ has a composition 
$C$, which is a normal tile of type $r$,
see Fig.~\ref{f40}.
  \begin{figure}
    \begin{center}
      \includegraphics[scale=1]{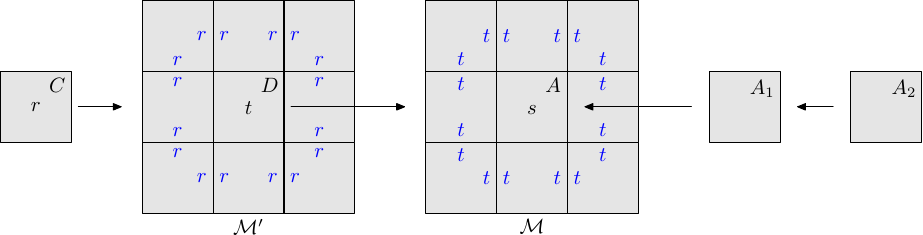}
    \end{center}
    \caption{Tiles $C,D,A$ have  types $r,t,s$, respectively, where $r$ is a non-central type and $t$
is a central acyclic type.
      The macrotile $\mathcal M'$ is a decomposition of $C$, and the macrotile $\mathcal M$ is a decomposition of $D$.
      The figure illustrates the proof that  $D$ is legal.}\label{f40}
  \end{figure}
   
Since the central child of $r$ is the acyclic type $t$,
the type $r$ is not central. Indeed, suppose $r$ were central. The form of a central type is a large quasi-square,
which is a fixed point of passing to the central child; hence $r$ and its central child $t$ have the same form.
Since the central child of a type depends only on its form, $t$ and $r$ have the same central child, namely $t$.
That is, the central child of $t$ is $t$ itself, so $t$ is cyclic --- contradicting the assumption that $t$ is acyclic.

Since $A$ is legal and is a grandchild of the normal tile $C$ of a non-central type, there exists a sequence of legal tiles
$$
A_0=A,A_1,A_2,\dots,A_n,
$$
in which each tile is a central child of the next one, the last tile is of a non-central type, and $n\in\{1,2\}$.
This is proven in exactly the same way as in the proof of item (c) of the Composition Lemma. (Only
the bound $n\in\{1,2\}$ is borrowed from there; the argument that excluded $n=2$ in item~(c) used
the non-centrality of the type of $D$, which is not the case now.)

Note that $A_1$ and $D$
are of the same form, as they have a common
central child $A$. Therefore $A_1$ is a central tile, hence $n=2$.
So, we have a chain of legal tiles $A=A_0,A_1,A_2$, in which each tile is a central child of the next one,
and $A_2$ is not central.

We claim that the tiles $A_1$ and $D$ have the same type. Exactly as in the proof of item~(c) of
the Composition Lemma, the tiles $A_1$ and $D$ are parents of one and the same tile $A$, hence
they are of the same form, hence the same print map $a\mapsto b(a)$ serves both of them, and
therefore
$$
\ind_{A_1}(j,a)=\ind_A(j,b(a))=\ind_D(j,a)
$$
for every marked side $a$ of their common form and every $j\le4$; in particular
$\Phi(A_1)=\Phi(D)$. This time, however, the tiles $A_1$ and $D$ are central, so
Lemma~\ref{l8} is not applicable to them. Instead we compare the multiplicities
of the type $t$ in these two multisets.

Being central tiles, both $A_1$ and $D$ have exactly $4N$ marked sides, and all of them are
special. The prints of the sides of $C$ are at most $Q$ of the marked sides of $D$, because the
form of the non-central type $r$ has at most $Q$ marked sides. On each of the remaining at least
$4N-Q$ special sides of $D$ the first index is the identity index of the corresponding side of the
central type $t$, and one of its two components equals $t$. Hence $t$ occurs in $\Phi(D)$ at least
$4N-Q$ times.

Assume now that the type $t'$ of $A_1$ differs from $t$, and count the occurrences of $t$ in
$\Phi(A_1)$. On a special side of $A_1$ that is not a print, the first index is the identity index
of a side of the central type $t'$; its components are $t'$ and the type of the small quasi-square
adjacent to that side, so neither of them is $t$ --- indeed $t'\ne t$ by the assumption, and $t$ is
a central type and hence is not a small quasi-square. The remaining marked sides of $A_1$ are
prints, and there are at most $Q$ of them; each side contributes at most two members to
$\Phi(A_1)$. So $t$ occurs in $\Phi(A_1)$ at most $2Q$ times. As $4N>3Q$ by the choice of $N$ in
Section~\ref{smarked}, we get $4N-Q>2Q$, which contradicts $\Phi(A_1)=\Phi(D)$. Therefore $t'=t$,
that is, $A_1$ and $D$ are of the same type.

Since  nonlocal indices of $A_1$ and $D$ coincide (they are inherited by their common child  $A$),
and their local indices coincide as well, being those of normal tiles of one and the same type,
the tiles $A_1$ and $D$ coincide, hence $D$ is legal.

Proposition~\ref{th2} is proved.
\end{proof}

Recall that to prove Theorem~\ref{th22}(a),
it remains to establish that for any proper tiling
$\mathcal T$ with tiles from $P'''$, the tiling
$\pi\Delta\mathcal T$ is a $\tau$-substitution tiling.

 We first show that all  crowns in $\pi\Delta\mathcal T$ are allowed.
Indeed,  let a vertex $V$ of a tile in  $\Delta\mathcal T$ 
 be given. By Proposition~\ref{th2} it has a $\sigma''$-composition
and hence can be  partitioned into $\sigma''$-macrotiles. 
Let $\mathcal M$ denote the macrotile that includes $V$.

Assume first that $V$ does not lie 
on the boundary of $\mathcal M$. 
In this case $V$ is an inner vertex of the $\tau$-macrotile $\pi\Delta\mathcal M$ 
and hence the crown centered at $V$ is allowed.

It remains to consider the case where $V$ lies 
on the boundary of $\mathcal M$. Since $\Delta$ does not produce new vertices on the boundary 
of $\sigma''$-macrotiles, $V$ is a vertex of a tile of $\mathcal M$.
On all sides incident to $V$ the second index contains some allowed crown, 
and that crown is the same for all the sides.
Indeed, let $A_1,\dots, A_n$ be all the tiles in $\mathcal T$ that include  $V$. And let 
$a_1,\dots, a_n$ be all their  sides that include $V$. 
Let $A_i$ have sides  $a_i$ and $a_{i+1 \bmod n}$.
 Then the crowns for $V$ in the second index of  
 $a_i$ in tiles $A_{i-1\bmod n}$ and  $A_i$ coincide, as $\mathcal T$ is proper.
On the other hand, the crowns for $V$ in $a_i$ and $a_{i+1 \bmod n}$ in the tile $A_i$ coincide, since 
$A_i$ is legal. 
 Moreover, for each tile $A_i$, the crown for $V$ in the second index of $a_i$ includes $A_i$.
 Therefore, the crown centered at $V$ can only be the one specified in the
 second indices, and therefore it is allowed.

Now we  show that, moreover, all finite fragments 
of $\pi\Delta\mathcal T$ occur in $\tau$-supertiles.
 By Proposition~\ref{th2} there exists a sequence $\mathcal T_0=\mathcal T, \mathcal T_m, \mathcal T_{2m},\dots$ of proper tilings   
 in which each tiling is a $\sigma''$-composition of the previous one.
 Then in the sequence 
$$
\pi \Delta \mathcal T_0=\pi\Delta\mathcal T, \pi\Delta\mathcal T_m, \pi\Delta\mathcal T_{2m},\dots
$$  
each tiling is a $\tau^m$-composition of the previous one.
As we have shown, all crowns in all these tilings are allowed.
 By Lemma~\ref{l22} the tiling $\pi\Delta\mathcal T_0$ is a substitution tiling. 
Proposition~\ref{th5} and hence Theorem~\ref{th22}(a) are proved.
 
 \subsection{Proof of Theorem~\ref{th22}(b)}

 Let $\tau$ denote the given substitution. Construct the tile 
 set $P'$ and the substitution $\sigma'=\Delta^{-1}\tau^m\Delta$
as before. Then we define $P''$ and $\sigma''$ along the same lines as before.
But this time we make a small modification:
\begin{itemize}
\item 
First we remove the crown index, say, set it to zero. 

\item
Second we change the definition of the alignment  index.
The reason for that is that  Proposition~\ref{th2}  guarantees only that a proper tiling $\mathcal T$
has a $\sigma''$-composition. This implies that   $\Delta\pi \mathcal T$ has a $\tau^m$-composition 
and hence has $\tau^i$-compositions for all $i=1,\dots,m$. However those compositions might not be
side-to-side tilings.

Now the alignment index on an outer side $b$
of a tile $B$ from macrotile $\sigma' A$ is defined as the tuple  $\pair{n_1,\dots, n_m}$
as follows. Let $a$ be the side of $A$ such that $b$ lies on the macroside $\sigma'a=\tau^m a$, and
for $i=0,1,\dots,m$ let $c_i$ denote the side of the supertile $\tau^i A$ that contains $b$; thus
$c_0=a$, $c_m$ is the side of $\tau^mA$ occupied by $b$, and $c_i\subseteq c_{i-1}$ for all $i$.
Then $n_i$ is the number of $c_i$, counted from left to right, among the sides of $\tau^iA$ into
which the side $c_{i-1}$ is subdivided.
Note that the numbering is \emph{relative} --- inside the side of the previous level --- and not
absolute along the whole superside $\tau^i a$.
In Fig.~\ref{f-8}  we have shown alignment indices on the 
north side  of a $\tau^{2}$-macrotile, where $\tau$ is the substitution of our first example (a square is substituted with a 3 by 3 grid).
\begin{figure}
  \begin{center}
    \includegraphics[scale=1]{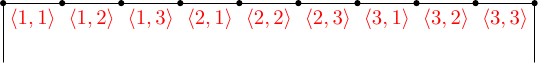}
  \end{center}
  \caption{Red indices on the north side of a $\tau^{2}$-macrotile, where $\tau$ is the substitution of our first example (a square is substituted with a 3 by 3 grid). }\label{f-8}
\end{figure}
This change makes Proposition~\ref{th2} hold in the following stronger form:
\emph{Every proper tiling $\mathcal T$ with legal tiles has a proper $\sigma''$-composition $\mathcal T'$ consisting of legal tiles.
  The tiling  $\mathcal T'$ has the following feature:
all tilings $\tau^{1}\Delta \pi(\mathcal T'), \dots, \tau^{m-1}\Delta \pi(\mathcal T')$
are side-to-side.}

Indeed, let $\mathcal T'$ be the tiling constructed in its proof.
Let $D$ be a tile from  $\mathcal T'$.  Since all $\tau$-supertiles are side-to-side tilings,
all supertiles $\tau^{m-1}\Delta\pi(D),\dots,\tau\Delta \pi(D)$ are side-to-side. So the problem may occur
only if for adjacent tiles $D,E\in  \mathcal T'$
there is $i<m$ such that some tiles $D'\in\tau^i \Delta\pi(D)$ and $E'\in\tau^i \Delta\pi(E)$
have overlapping sides $a',b'$ but do not coincide.

We show that this cannot happen. Consider the supersides $\tau^{m-i}(a')$
and $\tau^{m-i}(b')$. They belong to tiles of the tiling
$\tau^m\Delta\pi\mathcal T'=\Delta\pi\mathcal T$, and therefore they are
adjacent to each other side-to-side. Without loss of generality, we may
assume that the left endpoint of $\tau^{m-i}(a')$ lies to the right of,
or coincides with, the left endpoint of $\tau^{m-i}(b')$. Consider the
leftmost side $a''$ on the superside $\tau^{m-i}(a')$. Its alignment index
has the form $\pair{n_1,\dots,n_i,1,\dots,1}$: for every level $j>i$ the side of $\tau^jA$
containing $a''$ is the leftmost one in the subdivision of the side of the previous level, and it
is exactly here that the numbering has to be relative.

Since $\mathcal T$ is a
proper tiling, some side $b''$ is
adjacent to $a''$ side-to-side and has the same alignment index. This side lies on the superside
$\tau^{m-i}(b')$. Indeed, the sides $a'$ and $b'$ overlap along a segment of positive length, so
the left endpoint of $a''$ lies on $\tau^{m-i}(b')$ and differs from its right endpoint; and $a''$
cannot stick out beyond the right end of $\tau^{m-i}(b')$, since otherwise $a''$ would meet two
distinct sides of the side-to-side tiling $\Delta\pi\mathcal T$. By the
definition of the alignment index, it follows that $b''$ is the leftmost
side on the superside $\tau^{m-i}(b')$. Consequently, the left endpoints
of the supersides $\tau^{m-i}(a')$ and $\tau^{m-i}(b')$ coincide. The
coincidence of their right endpoints is proven analogously. Therefore
$a'$ and $b'$ are adjacent side-to-side.
\end{itemize}

The removal of crown indices makes Proposition~\ref{th1} hold for
$\tau$-side-to-side-hierarchical tilings: \emph{For any $\tau$-side-to-side-hierarchical tiling $\mathcal  T$ with tiles from $P$ there
is a proper tiling $\mathcal T '$ with legal tiles such that $\Delta\pi\mathcal T' =\mathcal  T$.}
The proof is the same, with one change: instead of appealing to Lemma~\ref{th11}, we take the chain
$\mathcal T_0=\mathcal T,\mathcal T_1,\mathcal T_2,\dots$ of side-to-side tilings provided by the
definition of a side-to-side-hierarchical tiling and pass to its subsequence with step $m$. The
positions inside the supertiles $\tau^iA$ that are needed to define the alignment tuples above are
read off from the intermediate members of that chain.

Now Theorem~\ref{th22}(b) follows. One direction is provided by the first modification: every
$\tau$-side-to-side-hierarchical tiling with tiles from $P$ is $\Delta\pi\mathcal T'$ for a proper
tiling $\mathcal T'$ with legal tiles. For the other direction, let $\mathcal T$ be any proper
tiling with legal tiles. Applying the strengthened form of Proposition~\ref{th2} repeatedly, we
obtain an infinite chain
$$
\mathcal T=\mathcal T^{(0)},\ \mathcal T^{(1)},\ \mathcal T^{(2)},\dots
$$
in which each tiling is a proper $\sigma''$-composition of the previous one and consists of legal
tiles. Passing to $\Delta\pi$ and inserting the intermediate levels
$\tau^1\Delta\pi\mathcal T^{(j+1)},\dots,\tau^{m-1}\Delta\pi\mathcal T^{(j+1)}$ between
$\Delta\pi\mathcal T^{(j)}$ and $\Delta\pi\mathcal T^{(j+1)}$, we obtain an infinite chain of
$\tau$-compositions of $\Delta\pi\mathcal T$.

All the tilings of this chain are side-to-side. For the intermediate levels this is exactly what the
strengthened form of Proposition~\ref{th2} says. For the levels $\Delta\pi\mathcal T^{(j)}$
themselves it follows from the local rule: the tiles of $\mathcal T^{(j)}$ are connected
side-to-side, the sides of a quasi-square are sides of the original tiles composing it, and inside a
quasi-square the original tiles form a fragment of a $\tau$-supertile, which is side-to-side by the
assumption STS. Hence $\Delta\pi\mathcal T$ is a $\tau$-side-to-side-hierarchical tiling, and
Theorem~\ref{th22}(b) is proved.

 \subsection{Proof of Theorem~\ref{th22}(c)}
 
 Obviously the families of $\tau$-hierarchical and $\tau^m$-hierarchical tilings coincide.\footnote{This is not the case for
 side-to-side-hierarchical tilings, and because of that we defined alignment indices in the proof of (b) in a more 
 complex way than in the proof of (a).}
So we can forget about $\tau$ and focus on $\tau^m$ instead, where $m$ is defined as in the proof of (a).
Moreover, a tiling $\mathcal T$ is $\tau^m$-hierarchical iff $\mathcal T=\Delta \mathcal T'$ for some $\sigma'$-hierarchical
tiling $\mathcal T'$. Using glues, we can forget about  $\tau^m$ and focus on $\sigma'$ instead.
That is, it suffices to prove that the family of $\sigma'$-hierarchical tilings is sofic.

To this end, we define the substitution $\sigma''$, that is, we color the tiles from $P'$, as in the proof of (a)
with the following modification.
As in the proof of item (b), the second index is not needed (it is always zero), so we can use only four
indices.
The alignment index of an outer side $b$ is now defined in the same way as in the proof of item (a):
If $b$ is an outer side of type $s$,
then the alignment index is defined as the number of the side in left-to-right order
along the macroside $\sigma' a$ that $b$ belongs to.
Otherwise, it is zero. 

The most important difference: now we sometimes set the fourth index on an outer side $b$ to zero,
and not to its alignment index.
We do this only in the case when $b$ lies on a macroside $\sigma' a$ such that $a$ is also an outer 
side whose fourth index is zero. 
On the other hand, on inner sides the fourth index will never be zero. To this end, we set
the fourth index  to the special value $\undef$
on all interior sides except special sides used for 
prints.

That is, in the definition of $\sigma'' A$ 
we change the requirement for the fourth index to the following ones:
\begin{itemize}
\item If $b$ lies on the macroside $\sigma' a$ where $a$  is an outer side
whose  fourth index is 0, then 
the fourth index of $b$ of $\mathcal M_s$
is $0$ (and not the alignment index of $b$, as before).
\item If $b$ is an outer side that does not fall into the previous case,
then the fourth index of $b$ of $\mathcal M_s$
is the alignment index of $b$, as before.
\item If $b$ is the print of $a$, then 
the fourth index of $b$ of $\mathcal M_s$ is equal to the fourth index of $a$, as before.
\item If $b$ is a boundary side, then  
the fourth index of $b$ of $\mathcal M_s$ is equal to the sixth index of  $A$, as before.
\item In the remaining cases the fourth index of $b$ of $\mathcal M_s$ is equal to $\undef$
(and not to 0, as before).
\end{itemize}

The family of legal tiles $P'''$ is defined in the same way as before.
We call the color of a side \emph{trivial} if its fourth and fifth indices are $0$.
The local rule for connecting tiles from $P'''$ is formulated as follows:
\begin{quote}
\emph{if the color of a side is nontrivial, 
then the side is adjacent side-to-side to a side of the same color of another tile}.
\end{quote}
Note that for legal tiles the color of all sides except outer sides
is non-trivial. And the color of an outer side is non-trivial iff its fourth
index is non-zero. Thus this local rule can be reformulated as follows: 
\begin{quote}
\emph{every side $a$ is adjacent side-to-side to a side of the same color of another tile
unless $a$ is an outer side with zero fourth index}.
\end{quote}
An important novelty: this rule allows two sides of trivial color to be connected with a shift.

\begin{prop}
A tiling $\mathcal T$ of the plane with tiles from $P'$
is $\sigma'$-hierarchical if and only if
$\mathcal T$ is obtained from some proper tiling 
with tiles from $P'''$ by erasing all indices. 
\end{prop}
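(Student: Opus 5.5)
We prove the two directions separately, following the proofs of Propositions~\ref{th1} and~\ref{th2}; the differences concern the trivial colors on outer sides.

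\emph{From hierarchical to proper.} Let $\mathcal T_0=\mathcal T,\mathcal T_1,\dots$ be a chain of $\sigma'$-compositions. We decorate each $\mathcal T_i$ as in the proof of Proposition~\ref{th1}. Identity, third and parent indices are filled in exactly as before: borrowed indices receive their origin, or $0$ if they have no origin. For the fourth index of an outer side $b$ of a tile of $\mathcal T_i$ we look at the chain of sides $a_{i+1},a_{i+2},\dots$ of the ancestors of that tile such that $b$ lies on the macroside of $a_{i+1}$, $a_{i+1}$ lies on the macroside of $a_{i+2}$, and so on. If some $a_j$ is an inner side, we take $j$ minimal; then $b$ gets its alignment index if $j=i+1$, and the recursive rule applies otherwise. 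If all $a_j$ are outer, the fourth index is $0$.

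We then check the local rule. Suppose a side has nontrivial color. Then at some finite level $j$ it lies inside the image of an inner side of a tile of $\mathcal T_j$. Since $\mathcal T_j$ is side-to-side at that inner side (two children inside one supertile), and the macrosides of both neighbours decompose identically, the sides match side-to-side with equal alignment indices at every lower level. Equality of the remaining indices follows as in part (a) of the proof of Proposition~\ref{th1}. Sides whose chains are outer forever carry trivial color, and the rule imposes nothing on them; this is exactly where a shift along a ``fault line'' is permitted. Legality is checked as in part (b) of Proposition~\ref{th1}.

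\emph{From proper to hierarchical.} It suffices to show that every proper tiling $\mathcal T$ with legal tiles has a $\sigma'$-composition that is again proper and consists of legal tiles; iterating this and erasing indices then gives the hierarchy. Identity indices are nontrivial on all inner sides, so the local rule forces the tiles to group into $\sigma'$-macrotiles, as before. The Composition Lemma~\ref{l9} and Lemma~\ref{l8} use only inner sides and ports lying inside a single macrotile, so they carry over verbatim and produce legal composed tiles $D$. The legality argument for central acyclic types also carries over unchanged.

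The new step, and the main obstacle, is properness of the composed tiling. The fourth index of a side $d$ of $D$ is read from the fourth port on $\sigma' d$, which is where the fourth index of $d$ is stored. If that index is nonzero, the clauses defining $\sigma''$ show that every outer side on $\sigma' d$ has nonzero alignment index. The local rule then forces the neighbouring macroside to be glued side-to-side, sharing ports, with equal alignment numbers. Hence the neighbouring macrotile has a macroside exactly matching $\sigma' d$, and all five indices of $d$ and of the neighbour's side agree through the ports, as in the proof of Proposition~\ref{th2}. If instead the fourth index is $0$, every side on $\sigma' d$ is trivial. Then $d$ has trivial color, nothing needs to be checked, and it does not matter whether the neighbouring macrotiles are shifted.

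The delicate points I expect are two. First, the fourth-index clauses must be consistent along a macroside: within one macroside the fourth indices are either all zero or all alignment numbers, and this is guaranteed by Property S\ref{s6}-style uniformity. Second, the glue colors that reassemble quasi-squares from original tiles must remain nontrivial. This is needed so that the glue rule still applies to them after we later pass from $P'$ back to $P$.
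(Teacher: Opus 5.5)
Your route is the paper's. For the first direction you use the Case~1/Case~2 decoration along the chain of parent sides, with side-to-side matching inside one supertile; this part is fine. For the second you group tiles by identity indices, rerun Lemma~\ref{l9}, and read the composed colors off the ports. The problem is the point you yourself call delicate, and Property~S\ref{s6} does not settle it.

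In the proof of Lemma~\ref{l9}, the step ``an index that is not borrowed satisfies the same requirement for $\sigma''C$ and for $\sigma''D$'' is no longer valid for the fourth index of an outer side on $\sigma'd$. Under the new clauses that index is $0$ exactly when $d$ is outer in $t$ and $\mathit{ind}_C(4,d)=0$, where $C$ is the legal parent of \emph{that particular} tile. Property~S\ref{s6} concerns two sides of one legal tile on one path. The outer sides of $\sigma'd$, by contrast, belong to different tiles, and only the ports lie on paths. The value $\mathit{ind}_C(4,d)$ is copied onto inner sides only along $P_4(d)$ (and onto the print of $d$), so for a rim tile off that path it is visible only on its outer side.

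Hence nothing prevents the following configuration. A tile on $\sigma'd$ that is not a port and lies on no path has a legal parent with fourth index $0$ on $d$, and it sits next to a port tile whose parent carries the alignment index. All inner sides of $\mathcal M$ still match, and if the tile across is trivial too, the local rule holds in the plane. Such legal tiles do exist: for the $3\times3$ square substitution, a tile of a non-corner, non-port boundary type off all paths, with fourth index $0$ on its outer side, is its own parent. In this configuration no $D$ with $\mathcal M\in\sigma''D$ exists, so this proper tiling has no $\sigma''$-composition at all.

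The paper's proof passes over the same point, saying only that the Composition Lemma ``is proven in the same way as before''. So you are not departing from it, but neither argument establishes the uniformity you need. One way to close the gap is to let the parent index on boundary sides also record which sides of the parent have fourth index $0$. The tiles touching the boundary of $\mathcal M$ are chained by boundary sides, so Property~S\ref{s2} and properness make this record common to all of them. It then fixes the outer fourth indices uniformly along every macroside, and it agrees with what the fourth ports hand to $D$.

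A smaller slip: your properness check for the composed tiling splits on whether $\mathit{ind}_D(4,d)=0$, and this misfiles a print $d$ of a central tile. Its fourth index can be $0$ (copied from an outer side with zero fourth index), yet $d$ is inner, its fifth index is nonzero, and the sides on $\sigma'd$ carry alignment indices. Split as the paper does: inner $d$, where $\sigma''$ is unchanged, versus outer $d$ with nonzero fourth index.
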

\begin{proof}
First, we prove the possibility of proper decoration of all tiles of a $\sigma'$-hierarchical
tiling $\mathcal T$. This decoration is chosen in the same way as before
with one exception: fourth indices on non-marked outer sides are now not
always alignment indices.
More specifically, let a sequence of tilings $\mathcal T=\mathcal T_0,\mathcal T_1,\mathcal T_2,\dots$
witness that $\mathcal T$ is $\sigma'$-hierarchical. And let $a$ be an \emph{outer} side of a tile $A\in \mathcal T$.
And let $a_0=a,a_1,a_2,\dots $ be defined so that  $a_i$ is the side of  $A_i\in \mathcal T_i$ such that
$a_{i-1}$ lies on the macroside $\sigma' a_i$.

Case 1: If all sides $a_i$ are outer, then we set to 0
the fourth index of all $a_i$. Note that in this case the fifth index is 0 
by construction, hence the color of $a$ is trivial.\footnote{In this case
all other indices of $a$ are also 0.
Indeed, the first index is 0, as $a$ is outer. The second index is 0, as so is now the crown index.
And the third index is 0 unless  $a$ is a $j$th port for $j\le 5$. And even then it is 0:
for $j\le4$ because the third index of $a$ has no origin (an index can originate only in an inner side),
and for $j=5$ because it equals the fifth index of an outer parent side, which is 0. This observation
will not be used in the future.} 

Case 2: Otherwise, if some $a_i$ is an inner side, then  all $a_{i+1},a_{i+2},\dots$ are undefined and 
all $a_0,a_1,a_2,\dots,a_{i-1}$
are outer sides. In this case the fourth index of $a$ as well as  the fourth index of 
all $a_j$ is defined as its alignment index, as before. 

The resulting tiles are legal for the same reason as before.
However, the local rule must be verified anew.
If  $a$ is  an \emph{inner} side of a tile $A\in \mathcal T$, then the local rule for $a$
is verified exactly as before, as the decoration for $a$ and all sides overlapping with $a$
has not been changed (those sides must also be inner). 

Let   $a$ be an \emph{outer} side of a tile $A\in \mathcal T$.
In Case 1 the color of $a$ is trivial and hence the local rule is met by $a$.

In Case 2 let $b$ be a side of another tile $B\in\mathcal T$
that overlaps with $a$.
We will show that $a$ and $b$ are adjacent side-to-side and have matching colors.

Consider analogous sequences of tiles $B_1\in \mathcal T_1$,  $B_2\in\mathcal T_2$, $\dots$ and their sides $b_1,b_2,\dots$ for the side $b$. 
As each $b_j$ for $j\le i$ overlaps with $a_j$, the side  $b_j$
is outer for all $j<i$. For the same reason the side $b_i$ is inner.  Thus  $b$ also falls under Case 2 for the same $i$.
We are given that all $\sigma'$-supertiles are side-to-side and all $a_j,b_j$ belong to one supertile. 
Hence  $a_j=b_j$ for all $j\le i$. In particular $a=b$.
The fourth index on all $a_j,b_j$ 
is thus defined as before. Therefore, in this case the decoration is the same as before.
Hence $a$ and $b$ have matching colors.

In the other direction: we need to prove that after erasing the indices from the tiles
of a proper tiling $\mathcal T$ we obtain a $\sigma'$-hierarchical tiling. 
It is enough to prove that $\mathcal T$ is $\sigma''$-hierarchical. And for this it 
suffices to show that any proper tiling has a proper $\sigma''$-composition (an analog of Proposition~\ref{th2}):
\begin{quote} 
\emph{Every proper tiling $\mathcal T$ with legal tiles has a 
proper $\sigma''$-composition $\mathcal T'$ consisting of legal tiles.}
 \end{quote}
\begin{proof}
Every legal tile has an inner side, and on all its inner sides
the identity index is present. Therefore, in any
proper tiling, the tiles assemble into macrotiles in the same way as before, and the Composition Lemma
is proven in the same way as before. The fact that tiles can now meet each other not side-to-side does not interfere,
since this happens only for sides of trivial color, and such sides are always outer.

Let us show that the composed tiling $\mathcal T'$ satisfies the local rule.
Let $a$ be a side of $\mathcal T'$ with a nontrivial color. If $a$ is an inner side, then 
$\sigma'' a$ has not changed. Thus all sides on the macroside
$\sigma' a$ have alignment indices. Hence their colors
are nontrivial, and by the local rule for $\mathcal T$ they are adjacent side-to-side to 
other sides with matching alignment indices. Those sides are composed into a side $b$
that is adjacent to $a$ side-to-side. The side $b$ has matching color, as  $\mathcal T$ is proper.

Assume now that $a$ is an outer side  with non-zero fourth index.
Then by definition of $\sigma''$ the fourth indices of all sides on the macroside $\sigma' a$
are alignment indices, and we can repeat the argument.

Legality of all the resulting tiles is proven in exactly the same way as before.
\end{proof}
\end{proof}

\section{Under FLC assumption}

\begin{theorem}\label{th24}
For every  substitution $\tau$ that has FLC,  
(a)  the family of $\tau$-substitution tilings is sofic and (b) the family of $\tau$-hierarchical tilings is sofic.
\end{theorem}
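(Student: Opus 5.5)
The plan is to reduce the FLC case to the STS machinery of Theorem~\ref{th22} by changing the notion of ``side''. Under FLC there are only finitely many $\tau$-allowed crowns, so there are only finitely many ways a vertex of one tile can lie inside a side of another tile. We subdivide every side of every prototile at all points that occur, in some allowed crown, as a vertex of a neighbouring tile. The result is a finite set of ``polygons with extra vertices'', and we call the pieces \emph{edges}. Since the subdivision points are read from allowed crowns, it is enough to record them per crown. Concretely, we decorate each tile by the collection of crowns at its vertices. The edge subdivision of each decorated tile is then determined by its decoration, and there are finitely many decorated tiles by FLC.

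With respect to edges, every fragment of a $\tau$-supertile becomes edge-to-edge: two overlapping edges in a supertile coincide, because every vertex on either side has been inserted. The substitution $\tau$ lifts to the decorated tiles, since the crowns of the children inside $\tau A$ are determined by the crowns of $A$ together with the interior of $\tau A$. Thus we obtain a substitution $\hat\tau$ on a finite decorated set that satisfies STS in the edge sense.

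We then rerun the construction of Section~3 for $\hat\tau$, with ``side'' replaced by ``edge'' throughout. This covers the quasi-squares, marked sides, ports, paths, identity, alignment and crown indices, and the local rule that matching edges must coincide and carry equal colours. Nothing in the proofs of Proposition~\ref{th1}, Proposition~\ref{th2} and the Composition Lemma used that sides are straight maximal segments of a polygon; they used only combinatorial adjacency of the pieces. Proposition~\ref{th1} needs each substitution tiling to be edge-to-edge after subdivision. This holds because every finite fragment lies in a supertile, all crowns are allowed, and so every vertex on a side is one of the inserted points. For the converse we argue as before. The crown indices force all crowns of $\pi\Delta\mathcal T$ to be allowed, Proposition~\ref{th2} gives an infinite chain of $\tau^m$-compositions, and Lemma~\ref{l22} yields a substitution tiling. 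Projecting away the crown decoration and the edge structure gives soficity for the original prototile set, which proves (a).

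For (b), a $\tau$-hierarchical tiling need not have allowed crowns, so edges cannot be made to match everywhere. Here we follow the proof of Theorem~\ref{th22}(c), which requires a side-to-side structure only inside supertiles and lets trivially coloured outer sides meet with a shift. Inside each $\hat\tau$-supertile everything is edge-to-edge by construction. Across boundaries of infinite-level supertiles (Case~1 sides) we impose no constraint, exactly as before. We reduce to $\sigma'$-hierarchical tilings using glues, and we let the fourth index vanish on outer edges of perpetually outer sides.

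The main obstacle is verifying that in Case~2 two overlapping outer edges $a,b$ must coincide. Before, this followed from STS of $\sigma'$-supertiles. Now it must follow from the fact that $a_i$ and $b_i$ lie in a common finite supertile, whose crowns are allowed and hence edge-to-edge after subdivision. I would check this first, together with the claim that the crown decoration is $\hat\tau$-hereditary, that is, that the crowns of the children are determined by the parent's crowns. If heredity fails, the fallback is to decorate by crowns of larger radius, which remain finite in number under FLC by a compactness argument in the spirit of Lemma~\ref{l7}.
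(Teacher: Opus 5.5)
Your plan is essentially the paper's proof. The paper gives each tile $A$ of a supertile $\mathcal S$ its neighbourhood $\mathcal S_A$, equivalently the crowns of $\mathcal S$ at \emph{all} vertices of $\mathcal S$ lying on $\partial A$; this is the reading of your ``crowns at its vertices'' that works, since crowns at the corners of $A$ alone determine neither the subdivision nor the children's decorations. It then inserts the corresponding extra vertices, checks that the lifted substitution $\tilde\tau$ has STS, and applies Theorem~\ref{th22}(a),(c) to $\tilde\tau$ as black boxes instead of rerunning Section~3.

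With this decoration heredity is immediate, because every tile of $\tau\mathcal S$ meeting a child of $A$ lies in $\tau C$ for some $C\in\mathcal S_A$; so no larger-radius fallback is needed. In (b), Case~2 follows verbatim from STS of $\tilde\tau$, so it is not the real obstacle. The step that actually needs an argument is lifting a $\tau$-hierarchical tiling to a $\tilde\tau$-hierarchical one, by giving each tile its neighbourhood inside the infinite-order supertile containing it; the paper likewise only asserts this step.
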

\begin{proof}
(a) This statement can be derived from Theorem~\ref{th22}(a).

Let $P$ denote the given set of prototiles. 
First, we construct a set of prototiles $\tilde P$,
each of which  looks like a tile from $P$ with added
vertices (the angles at the added vertices are $180^\circ$).
Let $A$ be a tile of the form $\alpha\in P$ and let $A$ belong to a $\tau$-supertile
$\mathcal S=\tau^i\beta$, $\beta\in P$. 
Consider all tiles from $\mathcal S$ that share points with $A$. They form a
finite tiling, denoted by ${\mathcal S}_A$ and called the
\emph{neighbourhood of $A$ in $\mathcal S$}.

Let us show that the number of neighbourhoods is finite. Let a supertile $\mathcal S=\tau^i\beta$ be given,
and let $\mathcal S_A$ be the neighbourhood of tile $A$ in $\mathcal S$. Its diameter is at most some $D$ depending only
on the set of prototiles.
Consider the supertiles
$$
\tau^i\beta, \tau^{i-1}\beta,\dots, \beta.
$$
Each of them is a composition of the previous one.
The image of the set $\mathcal S_A$ under composition, that is, $\theta^{-1} \mathcal S_A$, has diameter $\theta$ times smaller than $\mathcal S_A$.
Therefore, for some $k$ depending only on the substitution, the diameter of the image of $\mathcal S_A$ under the $k$-fold
composition becomes smaller than $\eps$, which exists by Lemma~\ref{l22}. Assume first that
$i\ge k$ (the case $i<k$ is trivial, as then $A$ lies in a supertile of one of the finitely many
orders $0,1,\dots,k-1$, and there are finitely many possibilities for such $A$ altogether).
Hence this image
is covered by a single crown of the tiling
 $\tau^{i-k}\beta$. In other words, there exists a crown $C$ in the supertile $\tau^{i-k}\beta$ for which $\tau^k C$
 covers $\mathcal S_A$.
The crown $C$ can be chosen in finitely many ways, since $\tau$ has FLC.
Finally, the tiling $\mathcal S_A$ within $\tau^k C$
can also be chosen in finitely many ways.

Each pair $\pair{\alpha,\mathcal S_A}$ defines one prototile from the set $\tilde P$.
Geometrically, this tile coincides with the tile $\alpha_{\mathcal S}$, obtained from $\alpha$ by adding to each of its sides
all the vertices of the tiles from $\mathcal S_A$ that belong to that side. Prototiles of the form $\pair{\alpha,\mathcal S_A}$
will be called \emph{clones} of the prototile $\alpha$. 
The correspondence between
prototiles from $\tilde P$ and prototiles from $P$ is defined by the function $\tilde\rho$,
defined by the equality $\tilde\rho( \pair{\alpha,\mathcal S_A})=\alpha$.

Now we define the substitution $\tilde\tau$ on $\tilde P$. When applied to $\pair{\alpha,\mathcal S_A}$,
it acts like $\tau$, but
in the macrotile $\tau\alpha $, we replace each tile $B$ by one of its clones. This clone is 
$$
\pair{\text{the form of }B,(\tau\mathcal S_A)_B}=\pair{\text{the form of }B,(\tau\mathcal S)_B}.
$$
Since $\mathcal S$ is a supertile, the tiling $\tau\mathcal S$ is a
supertile as well. So this clone is in $\tilde P$.

Let us show that the substitution $\tilde\tau$ satisfies the STS property.
Let $\tilde\tau^i\pair{A,\mathcal S_A}$ be any $\tilde\tau$-supertile. 
We have to prove that $\tilde\tau^i\pair{A,\mathcal S_A}$ is a side-to-side tiling.
By the definition of $\tilde\tau$, we have
$$
\tilde\tau^i\pair{A,\mathcal S_A}=\{\pair{B,(\tau^i\mathcal S)_B}\mid B\in\tau^i A\}.
$$
Geometrically, the tile $\pair{B,(\tau^i\mathcal S)_B}$ coincides
with the tile $B$, to whose sides all the vertices of the tiles from
$\tau^i\mathcal S$ lying on this side have been added.
Therefore, replacing in the supertile $\tau^i \mathcal S$
each tile $B$ by its clone $\pair{B,(\tau^i\mathcal S)_B}$
yields a side-to-side tiling.

\begin{lemma}\label{l-tilde}
A tiling $\mathcal T$ with tiles from $P$ is a $\tau$-substitution tiling if and only if
$\mathcal T=\tilde\rho\tilde{\mathcal T}$ for some $\tilde\tau$-substitution
tiling with tiles from $\tilde P$.
\end{lemma}
We have moved the proof of this almost obvious lemma to the Appendix.

By Theorem~\ref{th22}(a),
the family of $\tilde\tau$-substitution tilings is side-to-side sofic. Denote by
$\hat P$ the corresponding set of colored prototiles, and by
$\hat\pi$ the color-erasing map.

Now we can define a coloring of tiles of $P$ and the local rule.
The number of colored versions of a prototile $\alpha\in P$ will be equal to the number of prototiles $\hat\alpha$ of $\hat P$ for which
$\tilde\rho\hat\pi\hat\alpha=\alpha$.
For each such prototile $\hat\alpha$, we mentally add additional vertices from its second component onto the sides of the prototile
$\alpha$. These imaginary vertices partition each side into several \emph{segments}. These segments are then
colored
as they are colored in $\hat\alpha$. The resulting sequence
of colored segments constitutes the color of the side. The resulting tile
is denoted by $\hat\xi\hat\alpha$.
The set of decorated tiles is denoted by $\bar P$,
and the mapping that erases colors and neighbourhoods is denoted by $\bar\pi$.
We define the local rule as follows: 
\begin{quote}
\emph{The  tiling must be segment-to-segment and 
colors of adjacent segments must match}.
\end{quote}

We need to prove that a tiling $\mathcal T$ with tiles from $P$ is a substitution tiling 
if and only if $\mathcal T=\bar{\pi}\bar{\mathcal T}$
for some tiling $\bar{\mathcal T}$ satisfying the local rule.
In one direction: 
assume that a tiling $\bar{\mathcal T}$ with tiles from $\bar P$
satisfies the local rule, that is, it is segment-to-segment and adjacent segments have the same color.
We need to prove that the tiling $\bar{\pi}\bar{\mathcal T}$ is a substitution tiling. Indeed,
the same tiling $\bar{\pi}\bar{\mathcal T}$ can be obtained by performing the following three steps (see the figure
where the blue color represents a neighbourhood):
\begin{center}
\includegraphics[scale=1.3]{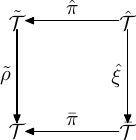}\hskip 2cm \includegraphics[scale=1.3]{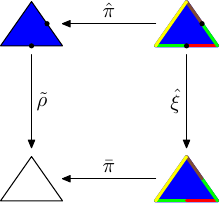}
\end{center}
(1) First, construct a proper tiling $\hat{\mathcal T}$ with tiles from the set $\hat P$,
for which $\hat\xi\hat{\mathcal T}=\bar {\mathcal T}$. To do this,  
break each side into separate  sides by adding additional vertices in those places where there is
a vertex of a tile from the neighbourhood. All the sides obtained are monochromatic. (2) Then erase the colors of sides, let 
$\tilde{\mathcal T}=\hat\pi\hat{\mathcal T}$ denote the 
obtained tiling. By the construction of $\hat P$ it is a $\tilde\tau$-substitution tiling.
(3) Erase the added vertices and neighbourhoods. By Lemma~\ref{l-tilde}, the resulting tiling $\mathcal T$
is a $\tau$-substitution tiling.

Conversely: given a $\tau$-substitution tiling $\mathcal T$, we have 
to construct a tiling $\bar{\mathcal T}$ satisfying the local rule,
for which $\mathcal T=\bar{\pi}\bar{\mathcal T}$.
First, using Lemma~\ref{l-tilde}
we construct a $\tilde\tau$-substitution tiling $\tilde{\mathcal T}$ such that
$\mathcal T=\tilde{\rho}\tilde{\mathcal T}$.
By the construction of $\hat P$, there exists a
proper tiling $\hat{\mathcal T}$ such that $\hat\pi\hat{\mathcal T}=\tilde{\mathcal T}$. By removing additional vertices from the tiles of this tiling,
we obtain the desired tiling $\bar{\mathcal T}=\hat\xi\hat{\mathcal T}$.
Indeed,
$
\bar\pi\bar{\mathcal T}=\bar\pi\hat\xi\hat{\mathcal T}=\tilde\rho\hat\pi\hat{\mathcal T}=\tilde{\rho}\tilde{\mathcal T}={\mathcal T}.
$

(b) Consider the same set of tiles $\tilde P$ as in the proof of item (a).
Each of its tiles is obtained from some tile of 
$P$ by adding new vertices. Consider 
the same substitution $\tilde\tau$, which extends $\tau$ and satisfies the STS property.
It is not difficult to prove that a tiling $\mathcal T$ with tiles from $P$ is $\tau$-hierarchical if and only if 
$\tilde\rho\tilde{\mathcal T}=\mathcal T$ for some  $\tilde\tau$-hierarchical tiling 
$\tilde{\mathcal T}$. 
Now we apply Theorem~\ref{th22}(c), 
obtaining a decoration of tiles from $\tilde P$ with the following property:
a tiling $\tilde{\mathcal T}$ with tiles from $\tilde P$ is $\tilde\tau$-hierarchical
if and only if for some proper tiling $\hat{\mathcal T}$ with colored 
tiles we have $\hat\pi \hat{\mathcal T}=\tilde{\mathcal T}$. 
Recall that the local rule for connecting tiles from $\tilde P$ reads:
\emph{if the color of a side is nontrivial, 
then it is adjacent side-to-side to another side of the same color}.

This local rule can be translated into a local rule
for the family of $\tau$-hierarchical tilings.
Indeed, in the transition from tiles of $P$ to tiles of $\hat P$, their sides 
are partitioned by the added vertices into segments, and each segment is colored in some color.
That is, different clones of the same prototile from $P$ differ from one another
in the added vertices and the colors of the segments into which the added vertices partition
the sides. Therefore, the local rule for connecting tiles from $\hat P$
translates into the following local rule for connecting tiles from $P$:
\emph{if the color of a segment is nontrivial, 
then it is adjacent segment-to-segment to another segment of the same color}.
\end{proof}

\section{Acknowledgments}

The author is sincerely grateful to Thomas Fernique for explaining the Fernique --- Ollinger technique in detail, 
to Nikita Andrusov, Andrei Romashchenko and Alexander Shen for listening to several previous versions of the results,
and to all participants of the Kolmogorov seminar at Moscow State University for attention and  patience.

\appendix
\section{Appendix}
\newcommand{\sector}{\text{sec}}
\newcommand{\angl}{\text{ang}}
\newcommand{\disc}{\text{disc}}

\begin{proof}[Proof of Lemma~\ref{l7}]
(a) Since $\tau$ has FLC, there are finitely many $\tau$-allowed crowns.
Call a crown \emph{complete} if the tiles of the crown surround its center $V$,
that is, the angles of these tiles at $V$ sum to $2\pi$.
For a complete $\tau$-allowed crown $K$ with center $V$, let $r(K)>0$ be the largest radius
such that the disk $\disc(V,r)$ of radius $r$ centered at $V$ is contained in the union of the tiles of $K$.
Let $d$ be the minimum of $r(K)$ over all complete $\tau$-allowed crowns $K$; since there are
finitely many of them, $d>0$.

Note that in a tiling $\mathcal T$ of the plane all of whose crowns are $\tau$-allowed every crown
is complete (its center is an inner vertex of $\mathcal T$). Hence for every vertex $V$ of $\mathcal T$
the disc  $\disc(V,d)$ is covered by the crown of $\mathcal T$ at $V$.

Next, let $\eta>0$ be the minimum, over all polygons $A_1\dots A_n$ in $P$, over all $i\in\{1,\dots,n\}$
and over all points $E\in[A_iA_{i+1\bmod n}]$ at distance at least $d/2$ from both ends of this segment,
of the distance from $E$ to the edge chain $A_{i+1\bmod n}A_{i+2\bmod n}\dots A_i$
(if there are no such points $E$, set $\eta=+\infty$). Finally, set $\eps=\min(\eta,d/2)$.

\emph{Case 1}.
If $S$ is covered by only one tile,
we are done --- any of its vertices can be taken as the center of the sought crown.

\begin{figure}
\begin{center}
\includegraphics[scale=1]{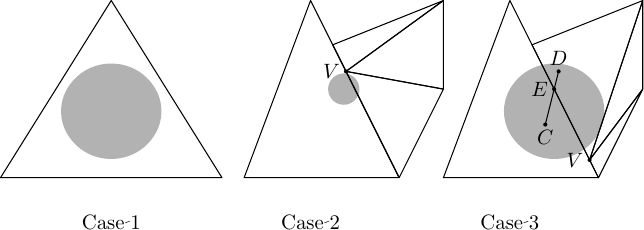}
\end{center}
\caption{The set $S$ is in grey.}
\end{figure}

\emph{Case 2}. If the distance from a point $E\in S$ to a vertex $V$ of a tile $A\in \mathcal T$ is smaller than $d/2$,
then the crown centered at $V$ covers $S$. Indeed, $V$ is a vertex of $\mathcal T$, so the crown at $V$
is complete and covers $\disc(V,d)$. Every point $F$ of $S$ satisfies
$|FV|\le|FE|+|EV|<\eps+d/2\le d$, so $S$ lies in that disk.

\emph{Case 3}. Otherwise $S$ cannot be covered by one tile from $\mathcal T$
and the distance from $S$ to vertices of $\mathcal T$-tiles is 
at least $d/2$.

Let $A$ be any tile intersecting $S$, say in point $C$, and let $D$
be any point from $S\setminus A$.
Consider the segment $[C,D]$. At some point $E$ that segment
leaves the tile $A$. W.l.o.g. assume that $S$ is convex and hence $E\in S$.
Since $E\in S$ and the diameter of $S$ is less than $\eps$, the whole set $S$ lies in $\disc(E,\eps)$.

The points of $[E,D]$ that lie very close to $E$ belong to some tile $B\ne A$ from $\mathcal T$.
That tile includes the point $E$. As $E$ is not a vertex of $\mathcal T$, it lies in the interior
of an edge shared by $A$ and $B$. We claim that
$S$ is covered by $A\cup B$.
Indeed, $E$ lies on an edge of $A$ (and on an edge of $B$) at distance at least $d/2$ from its ends,
so by the definition of $\eta$ every point of the border of the polygon $A\cup B$ is at distance
at least $\eta\ge\eps$ from $E$. As $S\subset\disc(E,\eps)$, all points of $S$ are in $A\cup B$.
It remains to notice that $A$ and $B$ belong to a crown of $\mathcal T$: indeed, either end of the
line segment shared by $A,B$ can be chosen as the center of that crown.

(b) Why does this argument fail in the case when $\mathcal T$ is a supertile?
There is a minor technical problem: for incomplete crowns $r(K)=0$. This is easily fixed as follows:
Define \emph{the angle of a crown} $K$ with center $V$, denoted by $\angl(K)$,  as the union of all
angles of the tiles $A\in K$ at $V$. For incomplete crowns it is natural to define $r(K)$ as the largest radius
$r$ such that the sector $\sector(K,r)=\disc(V,r)\cap\angl(K)$ is contained in the union of the tiles of $K$. And then
consider the sector instead of the disk.
\begin{figure}
\begin{center}
\includegraphics[scale=1]{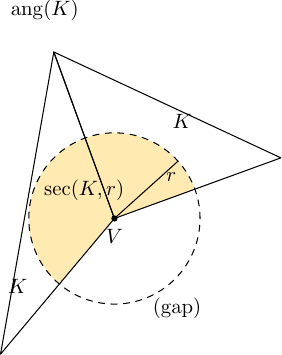}
\end{center}
\caption{An incomplete crown $K$ at $V$. The angle $\angl(K)$ is the union of the angles of the tiles
of $K$ at $V$, and $\sector(K,r)=\disc(V,r)\cap\angl(K)$ is the shaded sector.}\label{f-sector}
\end{figure}

However, this refinement does not solve two problems: in Case 2 the sector $\sector(K,d)$
may fail to cover $S\cap R$, because the set $R$ may contain points $d$-close to $V$ that lie
outside the angle $\angl(K)$ and are therefore not covered by the sector,
and in Case 3 the points of the segment $ED$ close to $E$ may lie outside $R$, so the tile $B$ is not defined.
Both problems can be solved by decreasing $\eps$.

Solving the first problem:
we need the set $R\cap \disc(V,d)$ to be included in $\angl(K)$
for any crown $K$ of the tiling $\mathcal T$ centered at $V$.
Ignoring the size, the set $R$ is some prototile.
If $V$ lies strictly inside this set, there is nothing to prove. Otherwise
$\angl(K)$ is the interior angle at some vertex $V$ of this prototile, or
a straight angle whose vertex $V$ lies strictly inside some side of this prototile.
Set $\nu$ equal to the minimum, over all polygons $A_1\dots A_n$ in $P$, over all $i\in\{1,\dots,n\}$
of the distance from $A_i$ to the edge chain $A_{i+1\bmod n}A_{i+2\bmod n}\dots A_{i-1\bmod n}$.
And set $\lambda$ equal to the minimum, over all polygons $A_1\dots A_n$ in $P$, of the distance
between two non-adjacent sides of $A_1\dots A_n$ (set $\lambda=+\infty$ if no polygon in $P$ has two non-adjacent sides).
Denote by $\mu$ the smallest exterior angle between adjacent sides of prototiles, that is, the smallest angle of the wedge lying
\emph{outside} the prototile between two of its adjacent sides (equivalently, $2\pi$ minus the interior angle at their shared vertex).
Finally, denote by $\gamma$ the minimal length of the sides of prototiles.

Then for any vertex $V$ on the boundary of the supertile $\mathcal T=\tau^j(A_1\dots A_n)$,
if one moves from it along a straight line
in the outward direction by a distance $x<\min(\nu,\lambda,\gamma\sin\mu)$, one leaves the supertile.
Indeed, suppose this is not the case, and denote by $G$ the first point of $R$
encountered during this movement. Consider two cases. (1) $V=\theta^j A_i$ for some $i\le n$.
Then $G$ belongs to the edge chain $\theta^j[A_{i+1\bmod n}A_{i+2\bmod n}\dots A_{i-1\bmod n}]$. And since we moved by less than $\nu\le\nu\theta^j$, this is impossible.
(2) $V$ lies on the interval $\theta^j (A_iA_{i+1\bmod n})$ for some $i\le n$.
But then $V$ is located at distance at least $\gamma$ from both ends of this interval.
If $G$ lies on the edge chain $\theta^j[A_{i+2\bmod n}A_{i+3\bmod n}\dots A_{i-1\bmod n}]$,
then $G$ and $V$ lie on non-adjacent supersides of $\tau^j(A_1\dots A_n)$; since this supertile is the polygon
$A_1\dots A_n$ scaled by $\theta^j$, we have $|VG|\ge\lambda\theta^j$, contradicting $x<\lambda\le\lambda\theta^j$.
Otherwise it lies on the segment
$\theta^j[A_{i+1\bmod n}A_{i+2\bmod n}]$ or $\theta^j[A_{i-1\bmod n}A_{i}]$, and then we get a contradiction with $x<\gamma\sin\mu$.
\begin{figure}
\begin{center}
\includegraphics[scale=.85]{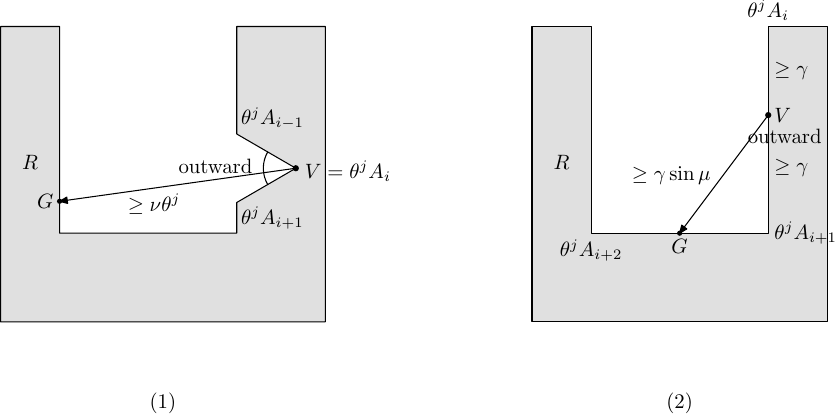}
\end{center}
\caption{Solving the first problem (the supertile $\mathcal T=\tau^j(A_1\dots A_n)$ may be non-convex):
moving outward from a boundary point by a distance less than $\min(\nu,\lambda,\gamma\sin\mu)$ leaves the supertile,
since otherwise the ray would re-enter $R$ at a point $G$ that is too far.
(1) $V=\theta^jA_i$ is a corner (here a reflex one, with exterior angle $60^\circ$); then $G$ lies on the far edge chain,
so $|VG|\ge\nu\theta^j$.
(2) $V$ lies in the interior of a superside $\theta^j[A_iA_{i+1}]$, hence at distance $\ge\gamma$ from both of its ends;
here the slanted outward ray re-enters at $G$ on the adjacent superside $\theta^j[A_{i+1}A_{i+2}]$, so $|VG|\ge\gamma\sin\mu$
(a re-entry on a non-adjacent superside would instead give $\ge\lambda\theta^j$).}\label{f-firstpr}
\end{figure}

Therefore, to solve the first problem it suffices to make $d\le \min(\nu,\lambda,\gamma\sin\mu)$.
Then $R\cap \disc(V,d)\subset\angl(K)$. Recall that in the second case we had
$S\subset \disc(V,d)$, hence
$$
S\cap R\subset \disc(V,d)\cap R\subset\angl(K)\cap \disc(V,d)=\sector(K,d)\subset K
$$
in Case 2.

Solving the second problem: we need that in Case 3 the points of the segment $ED$ close to $E$
lie in $R$. Suppose this is not the case. Consider the point $F$ at which the segment $ED$
returns to $R$. We know that the distance from both points
$F,E$ to any vertex of $\mathcal T$ is greater than $d/2$.
If the points $E,F$ belong to non-adjacent supersides of the supertile $\mathcal T=\tau^j(A_1\dots A_n)$, 
then $\eps\ge |EF|\ge\theta^j\lambda\ge\lambda$.
Otherwise, let $E$ belong to the superside $\theta^j[A_{i-1\bmod n}A_i]$, and $F$ to the superside  
$\theta^j[A_iA_{i+1\bmod n}]$ (for some $i$). Then
$\eps\ge |EF|> 2(d/2)\sin(\mu/2)$. Thus, if we define $\eps\le \min(\lambda,d\sin(\mu/2))$, both cases are impossible.
\begin{figure}
\begin{center}
\includegraphics[scale=1]{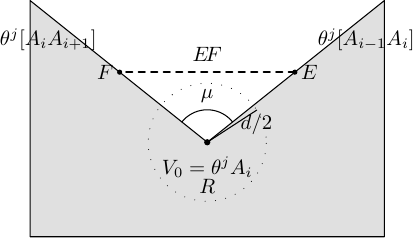}
\end{center}
\caption{Solving the second problem: $E$ and $F$ lie on the two supersides $\theta^j[A_{i-1}A_i]$ and $\theta^j[A_iA_{i+1}]$
meeting at the reflex vertex $V_0=\theta^jA_i$, both at distance $>d/2$ from $V_0$. The chord $EF$ crosses the
exterior wedge of angle $\mu$, so $|EF|>2(d/2)\sin(\mu/2)$.}\label{f-secondpr}
\end{figure}

From the above analysis it is clear that one can
define $\eps>0$ so as to satisfy
all the inequalities used: first decrease the number $d$ in the indicated way,
and then set $\eps=\min(\lambda, d\sin(\mu/2),\nu,\eta,d/2)$.
\end{proof}

\begin{proof}[Proof of Lemma~\ref{claim-prop1}]
Indeed, let a substitution tiling $\mathcal T$ be given.
By Proposition~\ref{th5}, there exists a proper tiling $\mathcal T'$ such that $\Delta\pi\mathcal T'=\mathcal T$. Consider the tiling
$\Delta\mathcal T'$.
For each tile $B$ of the tiling $\Delta\mathcal T'$ obtained by scattering
an added tile, and for each interior side of $B$, we add the corresponding glue to that side.
For the  resulting proper tiling $\tilde{\mathcal T}$ with tiles from $\tilde P$ we have
$\pi\tilde{\mathcal T}=\mathcal T$.
\begin{center}
\includegraphics[scale=1]{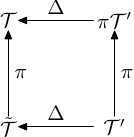}
\end{center}

Conversely, let $\tilde{\mathcal T}$ be a proper tiling
with tiles from $\tilde P$. We need to prove that the tiling $\pi \tilde{\mathcal T}$
is a substitution tiling.
In the tiling
$\tilde{\mathcal T}$, any tile with glue on at least one side
must be adjacent to a tile with the same glue. These tiles can
be assembled into an added tile. We perform this assembly and denote  by $\mathcal T'$ the resulting tiling with tiles from 
$P'$.
Since $\tilde{\mathcal T}$ is proper, the resulting tiling $\mathcal T'$ is also proper.
By Proposition~\ref{th5}, the tiling $\Delta\pi\mathcal T'$ is a substitution tiling.
It remains to note that $\Delta\pi\mathcal T'=\pi\Delta\mathcal T'=\pi \tilde{\mathcal T}$.
\end{proof}

\begin{proof}[Proof of Lemma~\ref{l-tilde}]
In one direction:
Let a $\tilde\tau$-substitution tiling $\tilde{\mathcal T}$ be given. We need to prove that
any finite fragment
$\mathcal F\subset\tilde\rho\tilde{\mathcal T}$
occurs in some $\tau$-supertile.
Obviously, $\mathcal F=\tilde\rho \tilde{\mathcal F}$ for some $\tilde{\mathcal F}\subset\tilde{\mathcal T}$. This fragment
$\tilde{ \mathcal F}$ belongs to some $\tilde\tau$-supertile $\tilde{\mathcal S}$. Therefore,
$\mathcal F$ is included in the $\tau$-supertile $\tilde\rho\tilde{\mathcal S}$.

Conversely, let $\mathcal T$ be a $\tau$-substitution tiling.
We construct the tiling $\tilde{\mathcal T}$ by adding to all tiles $A\in\mathcal T$ their
neighbourhoods in $\mathcal T$ (that is, all tiles from $\mathcal T$ that share a point with $A$). 
Since $\mathcal T$ is a $\tau$-substitution tiling, we obtain a tiling with tiles from the set $\tilde P$.

Let us show  that $\tilde{\mathcal T}$ is a $\tilde\tau$-substitution tiling.
Indeed, let $\tilde{\mathcal F}$ be any finite fragment of it. Consider any finite
patch $\tilde{\mathcal F}'$ of the tiling $\tilde{\mathcal T}$ that contains $\tilde{\mathcal F}$ strictly inside. This patch
is obtained from the patch $\tilde\rho\tilde{\mathcal F}'$ of the tiling $\mathcal T$ by adding
neighbourhoods in $\mathcal T$ to all tiles. Since $\mathcal T$ is a $\tau$-substitution tiling,
we know that $\tilde\rho\tilde {\mathcal F}'$ is included in some $\tau$-supertile $\mathcal S=\tau^i\alpha$.
Therefore, $\tilde\rho \tilde{\mathcal F}$ is strictly inside the supertile $\mathcal S=\tau^i\alpha$.

Notice that the pair 
$\tilde\alpha=\pair{\alpha,\{\alpha\}}$  is in $\tilde P$.
We claim that
$\tilde{\mathcal F}\subset\tilde \tau^i\tilde\alpha$.
Indeed, consider an arbitrary tile in $\tilde{\mathcal F}$.
By construction, it equals $\pair{\text{the form of }A, \mathcal T_A}$ for some tile $A$ lying strictly inside
$\tilde\rho\tilde{\mathcal F}'\subset\mathcal S$. Therefore, the neighbourhood of $A$ in the tiling $\mathcal T$ coincides with its neighbourhood in the tiling $\mathcal S$. That is, our tile is $\pair{\text{the form of }A, \mathcal S_A}$.

On the other hand, by the construction of the $\tilde\tau$ substitution, the second pairs
of all inner tiles in $\tilde\tau^i\tilde \alpha$ also consist of their neighbourhoods in
the tiling $\mathcal S$, so our tile belongs to $\tilde\tau^i\tilde \alpha$.
\end{proof}

\end{document}